\documentclass[11pt]{article}
\usepackage[utf8]{inputenc}
\usepackage{comment}
\usepackage{amssymb,amsmath,amsthm,graphicx,xcolor,mathtools,enumerate,mathrsfs}
\usepackage{enumitem,thmtools}
\usepackage{thm-restate}
\usepackage{cite}
\usepackage{booktabs}
\usepackage{float}
\usepackage{subcaption}
\usepackage{hyperref}
\usepackage[export]{adjustbox}
\hypersetup{
  colorlinks=true,
  linkcolor=blue,
  citecolor=green,
  urlcolor=magenta
}
\usepackage{cleveref}
 \usepackage[margin=2.54cm]{geometry}
\usepackage{amsthm}
\usepackage{placeins}
\usepackage{sectsty}
\usepackage{array}
\usepackage{multirow}
\sectionfont{\centering}
\crefrangelabelformat{lemma}{#3#1#4-#5#2#6}

\theoremstyle{plain}
\newtheorem{theorem}{Theorem}[section]
\crefname{theorem}{Theorem}{Theorems}

\newtheorem{proposition}[theorem]{Proposition}
\crefname{proposition}{Proposition}{Propositions}

\newtheorem{corollary}[theorem]{Corollary}
\crefname{corollary}{Corollary}{Corollaries}

\newtheorem{lemma}[theorem]{Lemma}
\crefname{lemma}{Lemma}{Lemmas}

\newtheorem{conjecture}[theorem]{Conjecture}
\crefname{conjecture}{Conjecture}{Conjectures}

\crefname{problem}{Problem}{Problem}

\crefname{claim}{Claim}{Claims}

\crefname{observation}{Observation}{Observations}

\crefname{setup}{Setup}{Setups}

\crefname{fact}{Fact}{Facts}

\crefname{remark}{Remark}{Remarks}

\crefname{example}{Example}{Examples}

\theoremstyle{definition}
\newtheorem{definition}[theorem]{Definition}
\crefname{definition}{Definition}{Definitions}

\crefname{construction}{Construction}{Constructions}

\crefname{question}{Question}{Questions}

\crefname{section}{Section}{Sections}
\Crefname{section}{Section}{Sections}

\crefname{subsection}{Subsection}{Subsections}
\Crefname{subsection}{Subsection}{Subsections}

\numberwithin{equation}{section}

\renewcommand{\int}[1]{\mathop{\mkern 0mu\mathrm{int}}\nolimits(#1)}

\allowdisplaybreaks

\definecolor{DarkDesaturatedBlue}{HTML}{3A3556}
\definecolor{VividOrange}{HTML}{F15918}
\definecolor{PureOrange}{HTML}{FFBA00}
\definecolor{LightGrayishPink}{HTML}{EEC5D5}
\definecolor{VerySoftBlue}{HTML}{B5AFDB}

\definecolor{DarkDesaturatedBlue}{HTML}{3A3556}
\definecolor{VividOrange}{HTML}{F15918}
\definecolor{PureOrange}{HTML}{FFBA00}
\definecolor{LightGrayishPink}{HTML}{EEC5D5}
\definecolor{VerySoftBlue}{HTML}{B5AFDB}
  \makeatletter
  \newcommand{\labelinthm}[1]{%
     \label{temp#1}
     \protected@write \@auxout {}{\string \newlabel{#1}{{\emph{\ref{temp#1}}}{\thepage}{\emph{\ref{temp#1}}}{temp#1}{}} }%
  }
  \makeatother

\title{An Ore-type Theorem for Oriented Discrepancy of Hamilton Cycles}
\author{Yufei Chang\thanks{School of Mathematics, Shandong University, Jinan 250100, China. Supported by National Natural Science Foundation of China (No.12571373).}
\and Yangyang Cheng\thanks{Faculty of Computer Science and Mathematics, University of Passau, Passau, Germany. Partially supported by the Deutsche Forschungsgemeinschaft (DFG, German Reseedgeh Foundation)-542321564.}
\and Zhilan Wang\footnotemark[1] \thanks{Corresponding author.Email: {\tt zhilanwang@mail.sdu.edu.cn.}}
\and Shuo Wei\footnotemark[1]
\and Jin Yan\footnotemark[1]}

\providecommand{\keywords}[1]{\textbf{Keywords:} #1}
\providecommand{\subjclass}[1]{\textbf{Mathematics Subject Classifications:} #1}

\begin{document}

\date{}
\maketitle

\begin{abstract}
Oriented graph discrepancy problems focus on finding specific subgraphs within a given oriented graph $G$ that contain a significant number of edges in one direction. This concept was first introduced by Gishboliner, Krivelevich, and Michaeli, and has since been further investigated by Freschi and Lo [J. Combin. Theory, Ser. B 169 (2024)], who gave a tight lower bound for the discrepancy of Hamilton cycles in terms of the minimum degree of $G$. Furthermore, they raised the problem of extending such results to Ore-type conditions.
Here, an Ore-type condition refers to the minimum degree-sum of non-adjacent vertices, formally defined as:
$\sigma_2(G) = \min\{d(x) + d(y) \mid x, y \in V(G) \text{ and } xy \notin E(G)\}$.
In this paper, we address this question by showing that  for every sufficiently large oriented graph \(G\), if $\sigma_2(G)\ge n$, then $G$ contains a Hamilton cycle $C$ with at least $\max\{n/2,\sigma_2(G)/2-o(n)\}$ edges in one direction. Moreover, this result is asymptotically tight.
\end{abstract}

\keywords{Oriented graphs, Hamilton cycles, Oriented discrepancy, Ore-type conditions}

\subjclass{05C20; 05C38; 05C40}

\section{Introduction}\label{sec:intro}

Graph discrepancy, introduced by Erd\H{o}s and Spencer  \cite{Erdos1963,ErdosSpencer1972}, extends classical discrepancy theory by studying the unavoidable imbalance in two-colourings of edges. In a profound result, Erd\H{o}s et al. \cite{ErdosFurediLoeblSos1995} proved that in any two-coloring of the edges of a sufficiently large complete graph, every spanning tree of bounded maximum degree must contain a monochromatic subgraph with a substantial excess of edges of one colour. A central and actively studied question asks: for a two-edge-colored host graph \( G \), what sufficient conditions ensure the existence of a given subgraph \( H \) with large discrepancy?
In particular, the first investigation into the discrepancy problem of Hamilton cycles was conducted by Balogh et al. \cite{Bal}, and this work can be regarded as a discrepancy analogu of Dirac's theorem \cite{Dirac}.

\begin{theorem}[\cite{Bal}]\label{Bal}
Let $0 < c < 1/4$ and $n \in \mathbb{N}$ be sufficiently large. If $G$ is a $2$-edge-coloured graph on $n$ vertices with $\delta(G) \geq (3/4 + c)n$, then $G$ contains a Hamilton cycle with at least $(1/2 + c/64)n$ edges in one colour.
\end{theorem}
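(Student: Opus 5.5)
The plan is to use absorption together with a regularity-type case split. Colour the edges red and blue, and suppose the red graph $R$ has at least as many edges as the blue graph $B$. Then $e(R)\ge \tfrac12 e(G)\ge \tfrac12\cdot\tfrac12 n(3/4+c)n$. We aim to build a Hamilton cycle whose red edges exceed $n/2$ by a linear amount $\Omega(cn)$.

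\textbf{Step 1: a linear forest with red excess.} First find a collection of vertex-disjoint paths in which red edges outnumber blue edges by at least $c n/32$. The key structural fact is that $\delta(G)\ge(3/4+c)n$ forbids both colour classes from being almost balanced everywhere. Concretely:
\begin{itemize}
\item Either some vertex has red degree and blue degree both large, and this happens at many vertices, so we can take many disjoint ``mixed'' configurations. Each is a short path or cycle segment, and choosing it locally gives a surplus of one colour.
\item Or the colouring is close to extremal, essentially one colour forming a near-bipartite or near-clique pattern. This case is handled directly.
\end{itemize}
In the first case, a red matching of size $(1/4+c/2)n$ exists by degree counting in $R$, and it can be extended greedily into red paths.

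\textbf{Step 2: absorption and connection.} Set aside a small random absorbing set $A$ and a reservoir $W$ of size $\varepsilon n$, with $\varepsilon\ll c$. Because $\delta(G)\ge(3/4+c)n>n/2$, any two vertices have at least $(1/2+2c)n$ common neighbours. So every pair of endpoints can be connected through $W$ by paths of length at most $2$, using the reservoir greedily. Connect the paths of the linear forest (reduced to $o(n)$ paths containing $\Omega(cn)$ red surplus), then absorb the leftover vertices with the standard Dirac-type absorbing path. Connections and absorption touch only $O(\varepsilon n)$ edges, so the red surplus drops by at most $O(\varepsilon n)\ll cn/64$.

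\textbf{Main obstacle.} The hard part is Step 1: guaranteeing a linear surplus $cn/64$ rather than just $n/2$. Here the $3/4$ threshold is sharp. Partition $V$ into quarters, colour edges inside two parts red and the rest blue, and every Hamilton cycle becomes balanced. So the argument must exploit the extra $cn$ degree.

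My first attempt would use a stability argument. Consider a maximum ``alternating deficiency'' structure: look at a Hamilton cycle $H$ that maximizes the number of red edges, which exists by Dirac. If $H$ has at most $n/2+cn/64$ red edges, perform rotations (Pósa/2-opt switches $x_ix_{i+1},x_jx_{j+1}\to x_ix_j,x_{i+1}x_{j+1}$). Local optimality then forces, for most $i$, restrictions on the colours of edges between the neighbourhoods of $x_i$ and $x_{i+1}$. Counting these constraints against $\delta\ge(3/4+c)n$ should yield a contradiction unless $G$ is close to the extremal quarter construction. That near-extremal case contradicts the degree surplus $cn$ directly.
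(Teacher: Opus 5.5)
The paper gives no proof of this theorem; it is quoted from \cite{Bal} only as motivation. Your proposal therefore has to stand on its own, and as written it does not.

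The first problem is a concrete error: the normalization ``red has at least as many edges as blue, so aim for a red excess'' is false. Let $V=S\cup T$ with $|S|=n/4$ and $|T|=3n/4$. Make $G[S]$ and $G[S,T]$ complete, and let $G[T]$ be $(1/2+c)n$-regular (for suitable $n$). Colour every edge meeting $S$ red and every edge inside $T$ blue. Then $\delta(G)=(3/4+c)n$, and $e(\mathrm{red})\approx 7n^2/32>(3/16+3c/8)n^2=e(\mathrm{blue})$ whenever $c<1/12$. Yet every red edge meets $S$, so every Hamilton cycle, and indeed every red linear forest, has at most $2|S|=n/2$ red edges, and every red matching has at most $n/4$ edges. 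This has three consequences:
\begin{itemize}
\item Your red matching of size $(1/4+c/2)n$ need not exist.
\item The stability branch cannot end in a contradiction with the degree surplus: here the red-maximising Hamilton cycle has at most $n/2$ red edges even though the surplus is the full $cn$.
\item In such a case the conclusion must be that the \emph{other} colour is biased. Here a Hamilton path of $G[T]$ (which exists by Dirac) closed through $S$ gives $3n/4-1$ blue edges. So the colour must be chosen from the structure, not from edge counts.
\end{itemize}
Your sharpness example is also wrong. With $G=K_n$ and red only inside two quarters, alternating between $V_1$ and $V_3$ and then between $V_2$ and $V_4$ gives an entirely blue Hamilton cycle. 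The correct example takes $S$ independent of size $n/4$, $T$ a clique, and $S$ complete to $T$, with $S$--$T$ edges blue and $T$-edges red. Every Hamilton cycle then has exactly $2|S|=n/2$ blue edges.

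Beyond this, Step~1, which is the whole content of the theorem, is only asserted. A colour surplus inside a linear forest means something only if the forest is almost spanning with few components, because the leftover vertices and the connecting edges may all carry the wrong colour. The individual pieces of Step~1 do not deliver this:
\begin{itemize}
\item A matching ``extended greedily into red paths'' gives neither almost-spanning coverage nor few components.
\item The dichotomy ``many mixed vertices versus near-extremal'' is never proved.
\item Local configurations with a surplus in \emph{some} colour cancel unless it is the same colour throughout. In the oriented setting of this paper each path can be traversed in its dominant direction, but colours cannot be reversed.
\item The $2$-opt counting is left at ``should yield''.
\end{itemize}
Your Step~2 is fine but does not touch this difficulty.

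A cleaner target is available. Put $k=(1/2+c/64)n$; then $\delta(G)\ge(3/4+c)n\ge(n+k)/2$. By a classical theorem of P\'osa, every linear forest with at most $k$ edges then lies in a Hamilton cycle. So the statement is equivalent, with no absorption needed, to showing that one of the two colour classes contains a linear forest with $(1/2+c/64)n$ edges. Proving that is where the real work lies. By the example above, such a proof must use both colour classes and the slack $cn$ in the minimum degree.
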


In recent years, this problem has attracted considerable attention and has been investigated for a wide range of subgraph types, such as perfect matchings and Hamilton cycles \cite{Bal,Fre,Gishboliner,Gis,Gis1,Gis2}, spanning trees \cite{Gis1}, $H$-factors \cite{BaloghC,Bradac2} and
powers of Hamilton cycles \cite{Bradac1}.

% In the present work, we consider an oriented version of graph discrepancy.

The oriented version of graph discrepancy, introduced by Gishboliner, Krivelevich, and Michaeli \cite{Gis2} and studied in this paper, naturally extends the classical framework: instead of measuring imbalance between colors in edge colorings, it quantifies the deviation of a Hamilton cycle from being consistently oriented along its traversal.

More formally, let $G$ be an oriented graph, that is, a loopless directed graph with at most one edge between any two vertices. The \textit{minimum total degree} $\delta(G)$ of $G$ is the minimum number of edges incident to a vertex of $G$. When there is no risk of confusion, we abbreviate minimum total degree to minimum degree.
For an \(n\)-vertex oriented graph \(G\),
 and an oriented cycle \(C = v_1 \ldots v_\ell v_1\), we say $\sigma^+(C) := \lvert\{1 \le i \le \ell : v_i v_{i+1} \in E(G)\}\rvert$ and $\sigma^-(C) := \lvert\{1 \le i \le \ell : v_{i+1} v_i \in E(G)\}\rvert$ be the number of forward and backward edges in $C$, respectively, (here we take the indices modulo $n$).
 Let $\sigma_{\max}(C) := \max\{\sigma^{+}(C), \sigma^{-}(C)\}$, which denotes the number of edges in \textit{dominant  direction} of the oriented cycle $C$, and $\sigma_{\min}(C) := \min\{\sigma^{+}(C), \sigma^{-}(C)\}$, which denotes the number of edges in \textit{minor direction} of $C$. Similar definitions and notation can be introduced for oriented paths.

Motivated by \cref{Bal}, Gishboliner, Krivelevich, and Michaeli \cite{Gis2} introduced an oriented analogue of graph discrepancy for Hamilton cycles. They conjectured a generalisation of Dirac's theorem, which was later proved by Freschi and Lo \cite{Freschi}.

\begin{theorem}[\cite{Freschi}]\label{Lo}
Let $G$ be an oriented graph on $n \geq 3$ vertices. If $\delta(G) \geq \frac{n}{2}$ then there exists a Hamilton cycle $C$ in $G$ such that $\sigma_{\max}(C) \geq \delta(G)$.
\end{theorem}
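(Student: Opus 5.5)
We prove \cref{Lo} by a rotation–extension argument on a single Hamilton cycle, optimised over all Hamilton cycles. By Dirac's theorem, $\delta(G)\ge n/2$ guarantees that the underlying graph has a Hamilton cycle. Among all Hamilton cycles $C$, fix one maximising $\sigma_{\max}(C)$, orient it so that $\sigma^+(C)=\sigma_{\max}(C)$, and suppose for contradiction that $\sigma^+(C)<\delta:=\delta(G)$. Call an edge of $C$ \emph{forward} if it agrees with the traversal and \emph{backward} otherwise. Then $C$ has more than $n-\delta$ backward edges. Since $\sigma^+\ge n/2$ always holds for the dominant direction, we have $n/2\le \sigma^+<\delta$.

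\textbf{Step 1: an exchange.} Take a backward edge $v_iv_{i+1}$, that is, $v_{i+1}\to v_i\in E(G)$. The standard Pósa/Ore exchange gives a new cycle: for an index $j$ with $v_i\sim v_j$ and $v_{i+1}\sim v_{j+1}$, form
\[
C' = v_i v_{i-1}\cdots v_{j+1}v_{i+1}v_{i+2}\cdots v_j v_i .
\]
This deletes the chords $v_iv_{i+1}$ and $v_jv_{j+1}$, adds the chords $v_iv_j$ and $v_{i+1}v_{j+1}$, and reverses the segment $v_{j+1}\cdots v_i$. Traversing $C'$ so that the long unreversed part keeps its direction, we compute $\sigma^+(C')$. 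It equals $\sigma^+(C)$ minus the forward count on the reversed segment plus its backward count, adjusted by the four changed edges.

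\textbf{Step 2: double counting.} Because reversing long segments changes the count uncontrollably, we count instead over pairs. We choose the pair of edges $v_iv_{i+1}$, $v_jv_{j+1}$ to be \emph{consecutive-type*} so that only a short segment is reversed. Concretely, for a vertex $v_i$ let $N^+$ and $N^-$ count out- and in-neighbours. We use $d^+(v)+d^-(v)\ge\delta$ together with the pigeonhole count $|\{j: v_i\sim v_j\}|+|\{j: v_{i+1}\sim v_{j+1}\}|\ge 2\delta> n$. This yields many $j$ for which the crossing exchange is available. Summing over all backward edges $v_iv_{i+1}$ and all valid $j$, the average change in $\sigma^+$ is computed edge by edge. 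Every forward edge of $C$ lies in a reversed segment about as often as it lies outside one, so it contributes zero on average. The new chords, however, contribute positively in expectation whenever their orientations are chosen to agree with the traversal. We aim to show that the total exceeds zero once $\sigma^+(C)<\delta$, using that $v_i$ has at least $\delta$ neighbours of which the number of appropriately oriented ones is forced by the degree bound. This gives some $C'$ with $\sigma_{\max}(C')>\sigma_{\max}(C)$, a contradiction.

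\textbf{Main obstacle.} The expected difficulty is exactly the sign bookkeeping in Step 2. A single crossing exchange reverses an arbitrary arc, so without control the gain from the two new chords can be swamped. I would first try restricting to $j$ close to $i$, or pairing each exchange with its complementary exchange, in which the other arc is reversed. Summing $C'$ and its complement makes the arc contributions cancel exactly, leaving only the four edges. This reduces the claim to showing that, for some backward edge, the number of indices $j$ at which both new chords are forward-oriented exceeds the number at which both are backward. That is a counting statement about in- and out-neighbourhoods along $C$. If this balanced count fails, I would expect to deduce a near-extremal structure instead (near-regular tournament-like with $\delta$ close to $n/2$) and handle it directly.
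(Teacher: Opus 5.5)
The paper does not prove this theorem; it is quoted from Freschi and Lo \cite{Freschi}. Your sketch therefore has to stand on its own, and its decisive step (Step~2) does not work.

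In the exchange removing $v_iv_{i+1}$ and $v_jv_{j+1}$, let $A=v_{i+1}\cdots v_j$ and $B=v_{j+1}\cdots v_i$ be the two arcs. Let $f\in\{0,1,2\}$ be the number of new chords pointing along the traversal of $C'$ that keeps $A$. That traversal has $\sigma^+(A)+\sigma^-(B)+f$ forward edges. Your ``complementary exchange'' is not a different exchange: it is the same cycle $C'$ traversed the other way, with $\sigma^-(A)+\sigma^+(B)+2-f$ forward edges. These two numbers always sum to $n$, so the summed change relative to $\sigma^+(C)$ is $n-2\sigma^+(C)\le 0$. The arcs do not cancel, the chords cancel as well, and no counting statement about the four edges can come out of this pairing.

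The same computation refutes the averaging claim. A forward edge lying in the reversed arc half the time contributes $-\tfrac12$ on average, not $0$. So the arc term averages to $-\tfrac12\bigl(\sigma^+(C)-\sigma^-(C)\bigr)$, which is of order $n$ when $\sigma^+(C)$ is close to a $\delta$ well above $n/2$, while the chords gain at most $2$. An improving exchange therefore needs a reversed arc with $\sigma^+(B)-\sigma^-(B)\le 1$, i.e.\ nearly balanced or backward-heavy. Nothing in your argument produces such a pair $(i,j)$: the admissible $j$ are dictated by where the neighbours of $v_i$ and $v_{i+1}$ sit on $C$, which you do not control, so you cannot restrict to short arcs.

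Several further steps are unsupported.
\begin{itemize}
\item The hypothesis bounds only the total degree $d^+(v)+d^-(v)$. A vertex may have all its neighbours as out-neighbours, as the source of a transitive tournament does, so the orientation of the new chords is not ``forced by the degree bound''.
\item The inequality $2\delta>n$ fails when $\delta=n/2$, although that case is trivial by Dirac.
\item More fundamentally, your contradiction uses only local information about $C$. What you actually need is that \emph{every} Hamilton cycle with $\sigma_{\max}<\delta$ admits a single improving exchange, i.e.\ that local optima of this move are global. That is strictly stronger than the theorem, and the sketch gives no reason to believe it.
\item The fallback to a ``near-extremal structure'' is not specified.
\end{itemize}
As written, the proposal contains no proof of its main step.

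A more promising source of the $\delta$ forward edges is global rather than local. By K\"onig's theorem, the bipartite graph with an edge $x^+y^-$ for each $xy\in E(G)$ has a matching of size at least $\delta$: a vertex $z$ outside a cover $S^+\cup T^-$ forces $N^+(z)\subseteq T$ and $N^-(z)\subseteq S$. This gives a spanning system of disjoint directed paths and cycles with at least $\delta$ edges. The real work would then be to merge such a system into a Hamilton cycle using $\delta\ge n/2$ while keeping at least $\delta$ edges in one direction.
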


Freschi and Lo further proposed an Ore-type extension of \cref{Lo} as an open problem in \cite{Freschi}. The Ore-type condition is captured by the parameter
$\sigma_2(G) = \min\{ d(x) + d(y) \mid x, y \in V(G) \text{ and } x, y \text{ are non-adjacent} \},$
where $x,y$ are non-adjacent means that there is no edge between them. In fact, this problem was proposed as a conjecture in the recent work of Ai et al. \cite{Guo}.
% In particular,  if \(G\) is a tournament, define \(\sigma_2(G) = \infty\).
% In an oriented graph, a pair of vertices $(x,y)$ is non-adjacent if no edge exists between them, denoted by $x \nsim y$.

\begin{conjecture}[\cite{Guo}] \label{pro1}
Let $G$ be an oriented graph on $n \ge 3$ vertices. If $\sigma_2(G) \ge n$, then $G$ contains a Hamilton cycle $C$ such that $\sigma_{\max}(C) \ge \sigma_2(G)/2$.
\end{conjecture}

Our main result provides an asymptotically affirmative answer to this conjecture for all sufficiently large \(n\).
It can be viewed as an extension of Ore's theorem \cite{Ore} to the setting of oriented discrepancy.

% Our main result provides an asymptotically affirmative answer to this conjecture for sufficiently large $n$, which can be viewed as an extension of Ore's theorem \cite{Ore} to the setting of oriented discrepancy.

\begin{theorem} \label{th2}
For every $\gamma > 0$, there exists a positive integer $n_0$ such that the following holds.
Let $G$ be an oriented graph on $n \ge n_0$ vertices with $\sigma_2(G)\ge n$.
Then $G$ contains  a Hamilton cycle $C$ such that  \[\sigma_{\max}(C) \ge  \max\left\{\frac{n}{2},\frac{\sigma_2(G)}{2}-\gamma n\right\}.\]
\end{theorem}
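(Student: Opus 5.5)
We split into the two regimes implicit in the maximum. Put $\sigma_2:=\sigma_2(G)$ and fix $\gamma>0$ small. For $\sigma_2\le(1+2\gamma)n$ we have $\sigma_2/2-\gamma n\le n/2$, so it suffices to find a Hamilton cycle $C$ with $\sigma_{\max}(C)\ge n/2$. Every Hamilton cycle has $n$ edges split between the two directions, so $\sigma^+(C)+\sigma^-(C)=n$ and hence $\sigma_{\max}(C)\ge n/2$ automatically. The underlying graph has $\sigma_2\ge n$, so Ore's theorem gives a Hamilton cycle. The bound $n/2$ is therefore immediate in all cases, and the real content is the regime $\sigma_2=(1+\beta)n$ with $\beta\ge 2\gamma$, where we need $\sigma_{\max}(C)\ge \sigma_2/2-\gamma n$.

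\textbf{Large $\sigma_2$.} First we analyse the degree structure. Let $L=\{v: d(v)<\sigma_2/2\}$. Any two vertices of $L$ are adjacent, so $L$ induces a tournament, and every $v\in L$ is adjacent to all $u$ with $d(u)<\sigma_2-d(v)$. The vertices of $H=V\setminus L$ have degree at least $\sigma_2/2$.

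If $|L|\le \gamma' n$ for a suitable small $\gamma'\ll\gamma$, we would like to reuse the Freschi--Lo machinery behind \cref{Lo} in an approximate form. Since $\delta(G[H])\ge \sigma_2/2-\gamma' n$, the stability/absorption approach of \cref{Lo} should give a Hamilton cycle of $G[H]$ (or a near-spanning path system) with about $\delta$ edges in one direction. We then absorb the few vertices of $L$ into it. Each $v\in L$ is adjacent to almost all high-degree vertices, since $d(v)+d(u)\ge\sigma_2$ whenever $u\nsim v$ and $d(v)$ is small. Inserting each such $v$ costs $O(1)$ dominant-direction edges, for a total loss of at most $O(\gamma' n)\le\gamma n$.

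Concretely, I would aim to prove directly the following: \emph{if all but $\gamma' n$ vertices have degree at least $d\ge n/2+\gamma n$, and the exceptional vertices satisfy the Ore condition with respect to the rest, then there is a Hamilton cycle with $\sigma_{\max}\ge d-\gamma n$.} The method is the standard Freschi--Lo route, done approximately with regularity:
\begin{enumerate}
\item Apply the (digraph) regularity lemma to $G[H]$.
\item In the reduced oriented graph, find a structure such as a large matching of clusters, oriented so that most cluster pairs carry many forward edges. This uses the fact that every vertex has at least $d-n/2$ more neighbours than needed to force dominance of one direction: a vertex of degree $d$ has out- or in-degree at least $d/2$, and summing shows that consistently oriented edges can be chosen in a proportion matching $d/n$.
\item Connect the pieces with an absorbing path, using that the underlying graph is a robust expander when $\sigma_2\ge(1+\beta)n$.
\end{enumerate}

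If instead $|L|>\gamma' n$, then since $L$ is a tournament of linear size, $L$ itself contains long directed paths: every tournament has a Hamilton directed path. Moreover, the vertices of $H$ satisfy $d(u)\ge \sigma_2-d(v)$ for their non-neighbours $v\in L$, which makes $H$ quite dense. Here I would combine a directed Hamilton path of $G[L]$ with a high-discrepancy path covering $H$, and join them through two connecting edges.

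\textbf{Main obstacle.} The hardest step is the discrepancy count: showing that a Hamilton cycle exists whose dominant direction has at least $\sigma_2/2-\gamma n$ edges when degrees are unbalanced, i.e.\ interpolating between a dense tournament part $L$ and a high-degree part $H$. I would first try to assign each vertex its majority direction (out versus in), and use a weighted counting argument on the reduced graph. The aim is to show that the fractional version of the problem, a fractional $2$-factor maximizing forward edges, has value at least $\sigma_2/2$. Regularity and absorption then convert this fractional solution into a genuine Hamilton cycle, losing only $\gamma n$.
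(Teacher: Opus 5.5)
Your treatment of the regime $\sigma_2\le(1+2\gamma)n$ via Ore's theorem is correct and matches the paper. However, the proposal has a genuine gap exactly where the content of the theorem lies. You never produce a (near-)spanning family of paths with about $\sigma_2(G)/2$ edges in one direction, and you acknowledge this yourself in the ``main obstacle'' paragraph.

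The mechanism you sketch in step~2 would not supply it. In the reduced oriented graph each edge $ij$ corresponds to a cluster pair carrying edges in one direction only. In general no pair is dense in both directions; the blow-ups of transitive tournaments in Proposition~\ref{Cextremal-example1} are an example. So a path confined to the blow-up of a single edge alternates between $V_i$ and $V_j$, and hence between forward and backward edges. A cover built from a matching of clusters therefore gives only about $n/2$ dominant edges, which is nothing beyond the trivial bound. Local majority in- or out-degrees of single vertices do not create long consistently oriented paths.

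The surrounding case analysis is also unsupported:
\begin{itemize}
\item The claim that each $v\in L$ is adjacent to almost all high-degree vertices is false. The Ore condition only forces non-neighbours of $v$ to have degree at least $\sigma_2-d(v)$, and $d(v)$ can be as small as about $\sigma_2-n$. So $v$ may miss linearly many vertices of $H$, and inserting it needs a genuine absorber.
\item In the case $|L|>\gamma' n$ you do not say how many dominant edges the $H$-part must contribute, nor where a high-discrepancy spanning path of $G[H]$ would come from. The only available bounds are $\delta(G[H])\ge\sigma_2/2-|L|$ and $\sigma_2(G[H])\ge\sigma_2-2|L|$. These fall below the Dirac and Ore thresholds $|H|/2$ and $|H|$ once $|L|>\sigma_2-n$.
\end{itemize}

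The missing idea is to do the count in the reduced graph, using an Ore-type Hajnal--Szemer\'edi tiling and long directed paths inside the tiles; no $L/H$ split is needed. The paper's argument runs as follows.
\begin{itemize}
\item By Lemma~\ref{oreprop}, the reduced oriented graph $R$ on $k$ clusters satisfies $\sigma_2(R)\ge(\sigma_2(G)/n-5\varepsilon-4d)k$.
\item Corollary~\ref{hst}, derived from Kierstead--Kostochka, tiles $R$ with $B_r$ tournaments on $r$ vertices and $\overline{B}_r$ tournaments on $r-1$ vertices. That is exactly $k-\sigma_2(R)/2$ tiles.
\item Every tournament has a directed Hamilton path, so closing each tile into a cycle loses at most one edge. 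Hence $R$ has a cycle factor with at least $\sigma_2(R)/2$ dominant edges. This is precisely the ``$2$-factor of value $\sigma_2/2$'' you were looking for, obtained integrally.
\item The Blow-up Lemma (Lemma~\ref{blowup}) turns each tile-cycle into a cycle of $G$ winding $(1-4\varepsilon)m$ times around it. This multiplies the dominant edges by $(1-4\varepsilon)m$ and yields $\sigma_2(G)/2-5dn$, as in \cref{lem:covering}.
\item The absorbing path and the reservoir then close these paths, each traversed along its dominant direction, into a Hamilton cycle without destroying any of the counted edges.
\end{itemize}
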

The bound of $\sigma_{\max}(C)$ in \cref{th2} is best possible: under the same condition \(\sigma_2(G) \geq n\), one cannot guarantee a Hamilton cycle \(C\) with \(\sigma_{\max}(C) > \sigma_2(G)/2\).

Indeed, \cref{th2} reduces to a generalization of \cref{Lo} for sufficiently large \( n \) when the error term \( \gamma n \) is omitted. Moreover,
to make \cref{th2} more complete, we define $\sigma_2(G) = 2(n-1)$ for tournaments in this paper, a definition that may deviate from those adopted in other relevant literature. It is a fundamental result in tournament that every tournament contains a Hamilton path; based on this, our theorem remains valid in the context of tournaments.

Recently, Ai et al. \cite{Guo} also considered a similar problem from a different perspective. They proposed a conjecture (Conjecture 1.2 in \cite{Guo}) that under the condition $\sigma_2(G) \geq n$, every oriented graph on $n$ vertices contains a Hamilton cycle $C$ with $\sigma_{\max}(C) \geq \max\{\delta(G), n - \delta(G)\}$. As a partial result, they proved that if $1 < \delta(G) < n/2$ and $\sigma_2(G) \geq n + \delta(G) - 2$, then a Hamilton cycle exists with $\sigma_{\max}(C) \geq n - \delta(G)$.

\noindent\textbf{Organization:}
The rest of the paper is organized as follows.
 In \cref{not,subsec:extremalexample}, we introduce notation and present an extremal construction showing that the bound of $\sigma_{\max}(C)$ in \cref{th2} is best possible. \cref{ov} provides an overview of the proof of \cref{th2}.
 In \cref{sec:tool} we present some auxiliary lemmas, also introduce some useful tools including Diregularity Lemma.
 Next, \cref{sec:ore} presents key absorbing, connecting, and almost covering lemmas needed for the proof of \cref{th2}, and \cref{sec:proof1} completes the proof.
 Finally, we make some concluding remarks in \cref{sec:remark}.

\section{Preliminaries and proof overview}\label{sec:prep}

\subsection{Notation}\label{not}

Let \(G\) be a digraph, and let \(U(G)\) denote its underlying graph, obtained from \( G \) by ignoring the directions of its edges. An oriented graph is a digraph without loops such that between every two vertices there is at most one edge.
The number of vertices \( |G| := |V(G)| \) is called the \textit{order} of \( G \), and the number of edges \( e(G) := |E(G)| \) is called the \textit{size} of \( G \). Given two vertices \( x \) and \( y \) of \( G \), we write \( xy \) for the edge directed from \( x \) to \( y \).
A cycle or a path is called \textit{oriented} if every one of its edges is directed. Unless stated otherwise, all paths and cycles in this paper are oriented.

Given subsets \( A, B \subseteq V(G) \) (not-necessarily disjoint), let \( E_G(A, B) \) (or simply \( E(A, B) \)) be the set of all \( xy \in E(G) \) such that \( x \in A \) and \( y \in B \). Let \( e_G(A, B) := |E(A, B)| \); we omit the subscript \( G \) here when the digraph \( G \) is clear from the context. Given two disjoint subsets \( X \) and \( Y \) of \( V(G) \), an \( (X,Y) \)-edge is an edge \( xy \), where \( x \in X \) and \( y \in Y \).
Given two vertices $x$ and $y$, an $(x,y)$-oriented (directed) path is an oriented (directed) path which joins $x$ to $y.$ The length of a path or a cycle is defined as the number of its edges.

If \( A, B \subseteq V(G) \) are disjoint then we define \( G[A, B] \) as the subdigraph of \( G \) where \( V(G[A, B]) = A \cup B \) and \( E(G[A, B]) = E_G(A, B) \). Given \( X \subseteq V(G) \), we write \( G[X] \) for the subdigraph of \( G \) induced by \( X \). We write \( G \setminus X \) for the subdigraph of \( G \) induced by \( V(G) \setminus X \).

% The $k$-th power of a (di)graph $G$ is the graph on the same vertex set in which two vertices are joined by an edge if and only if their distance in $G$ is at most $k$.For brevity we call the \( k \)-th power of a  path a \textit{$k$-path} and the \( k \)-th power of a cycle a \textit{$k$-cycle}.

Any orientation of $K_r$, the $r$-vertex complete graph, results in a tournament $T_r$, and $\overrightarrow{T_r}$ be a transitive tournament on $r$ vertices.

For graphs or digraphs \(G\) and \(H\), an \(H\)-tiling in \(G\) is a collection of vertex-disjoint subgraphs of \(G\), each of which is a copy of \(H\). An \(H\)-factor in \(G\) is a collection of vertex-disjoint copies of \(H\) that together cover the entire vertex set \(V(G)\). For \(k \in \mathbb{N}\), we write \([k] = \{1, 2, \dots, k\}\).

Throughout the paper, we omit all floor and ceiling signs whenever these are not crucial. The constants in the hierarchies used to state our results are chosen from right to left. For example, if we claim that a result holds whenever \( 0 < a \ll b \ll c \leq 1 \), then there are non-decreasing functions  $f : (0, 1] \to (0, 1] \text{ and } g : (0, 1] \to (0, 1]$
such that the result holds for all \( 0 < a, b, c \leq 1 \) with \( b \leq f(c) \) and \( a \leq g(b) \).

% Throughout, $\mathbb{N}$ denotes the set of positive integers (that is, it does not contain $0$). For a positive integer $t$, we simply write $\{1, \ldots , t\}$ as $[t]$.
\smallskip
\subsection{The extremal example}\label{subsec:extremalexample}

In this section, we construct an extremal example to show that \cref{th2} is sharp in a certain sense, namely that when \(\sigma_2(G) \geq n\), one cannot always find a Hamilton cycle \(C\) with \(\sigma_{\max}(C) > \sigma_2(G)/2\).
To proceed, we first recall the notion of digraph \textit{composition}  as introduced by Bang-Jensen and Gutin~\cite{BJ}. Let \(D\) be a digraph with vertex set \(\{v_i : i \in [n]\}\), and let \(G_1, G_2, \ldots, G_n\) be pairwise vertex-disjoint digraphs. The composition \(D[G_1, G_2, \ldots, G_n]\) is defined as the digraph \(L\) with vertex set \(\bigcup_{i=1}^n V(G_i)\) and edge set $\left( \bigcup_{i=1}^n A(G_i) \right) \cup \left\{ g_i g_j : g_i \in V(G_i),\; g_j \in V(G_j),\; v_i v_j \in E(D) \right\}.$

\begin{proposition}\label{Cextremal-example1}
For any $n\ge 4,$ there exists an  $n$-vertex oriented graph $G$ with $\sigma_2(G)\ge n$ such that every Hamilton cycle $C$ in $G$ satisfies $\sigma_{\max}(C)\le \sigma_{2}(G)/2$.
\end{proposition}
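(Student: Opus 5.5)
The plan is a direct construction: a "bipartite-like" oriented graph in which every edge between the two sides points the same way. Take a partition $V(G)=A\cup B$ and orient every $A$--$B$ edge from $A$ to $B$; in the composition language this is $\overrightarrow{T_2}[G_A,G_B]$ for suitable $G_A,G_B$. The key observation concerns any Hamilton cycle $C$ traversed in a fixed direction. It leaves $A$ for $B$ exactly as often as it leaves $B$ for $A$. A crossing from $A$ to $B$ is a forward edge and a crossing from $B$ to $A$ is a backward edge, since all crossing edges point into $B$. Hence the crossing edges of $C$ split evenly between $\sigma^+(C)$ and $\sigma^-(C)$. The only possible imbalance comes from edges of $C$ lying inside $A$ or inside $B$, so we keep those as few as possible while still forcing $\sigma_2(G)\ge n$.

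\emph{Case $n$ even.} Let $|A|=|B|=n/2$ with $A$ and $B$ independent, so $G$ is the complete bipartite graph oriented from $A$ to $B$. Every vertex has degree $n/2$. The non-adjacent pairs are exactly the pairs inside one side, so $\sigma_2(G)=n$. Every Hamilton cycle alternates between the sides, so by the observation $\sigma^+(C)=\sigma^-(C)=n/2=\sigma_2(G)/2$. A Hamilton cycle exists because $U(G)=K_{n/2,n/2}$ with $n/2\ge 2$.

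\emph{Case $n$ odd ($n\ge 5$).} Let $|A|=(n-1)/2$ with $A$ independent, and $|B|=(n+1)/2\ge 3$ with $G[B]$ a directed cycle on $B$. All $A$--$B$ edges are present and oriented $A\to B$.
\begin{itemize}
\item Each vertex of $A$ has degree $(n+1)/2$, and each vertex of $B$ has degree $(n-1)/2+2=(n+3)/2$.
\item Non-adjacent pairs occur only inside $A$ (degree sum $n+1$) or inside $B$ (degree sum $n+3$). Since $|A|\ge 2$, this gives $\sigma_2(G)=n+1\ge n$.
\end{itemize}
Now let $C$ be any Hamilton cycle. Since $A$ is independent, each vertex of $A$ contributes two crossing edges, and no crossing edge has both ends in $A$. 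So $C$ has exactly $n-1$ crossing edges and exactly one edge inside $B$. The crossing edges give $(n-1)/2$ forward and $(n-1)/2$ backward edges, so
\[
\sigma_{\max}(C)\le \frac{n-1}{2}+1=\frac{n+1}{2}=\frac{\sigma_2(G)}{2}.
\]
A Hamilton cycle exists: traverse one edge $b_1b_2$ of $G[B]$, then alternate through $A$ and the remaining vertices of $B$.

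The only step needing care is the odd case: the single inner edge is unavoidable, so $\sigma_2$ must be raised to $n+1$. That is what forces every vertex of $B$ to have inner degree at least $2$; inner degree $1$ would leave a non-adjacent pair in $B$ with degree sum below $n$. The directed cycle on $B$ meets this requirement exactly. The remaining value $n=4$ is even and so is covered by the first case.
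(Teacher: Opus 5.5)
Your proof is correct and proves the proposition as stated for every $n\ge 4$. It takes a narrower and more elementary route than the paper.

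The paper fixes a target value $h\in[n,2(n-1)]$ and chooses $r$ with $2(1-\frac{1}{r-1})n\le h\le 2(1-\frac{1}{r})n$. It takes the blow-up $\overrightarrow{T_r}[V_1,\dots,V_r]$ with independent parts of sizes $t,\ell,\dots,\ell$, so that $\sigma_2(G)=h$. It then bounds $\sigma^-(C)$ by cutting $C$ into $\ell$ consecutive segments of lengths $r-1$ and $r-2$, and showing that each stretched segment carries a backward edge, with the compensation $m_2\ge m_1$.

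Your constructions sit at the bottom of that family:
\begin{itemize}
\item Your even case is exactly the instance $r=2$, $h=n$, $t=\ell=n/2$.
\item For odd $n$, the instance $r=3$, $h=n+1$, $t=1$ has the same shape as your graph: an independent class of size $(n-1)/2$ joined one-way to the rest. The only difference is an out-star on the larger side where you use a directed cycle.
\end{itemize}

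What each approach buys:
\begin{itemize}
\item Your crossing-parity count replaces the segment bookkeeping with a two-line argument and needs no integrality conditions.
\item Your examples only have $\sigma_2\in\{n,n+1\}$, where the bound of \cref{th2} is essentially $n/2$. The paper's family is what shows that the $\sigma_2(G)/2$ term cannot be improved across the whole range of $\sigma_2$.
\end{itemize}

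In fact your observation handles the paper's whole family directly. Every vertex of the sink class $V_r$ has both of its cycle edges pointing into it. Charging to each such vertex the cycle edge entering it and the one leaving it gives $\sigma^{+}(C),\sigma^{-}(C)\ge|V_r|=\ell=n-h/2$, hence $\sigma_{\max}(C)\le h/2$.

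There is a minor slip in your closing commentary, though it is not used in the proof. Inner degree $1$ on $B$ would not produce a non-adjacent pair of degree sum below $n$. Two non-adjacent vertices of $B$ with inner degree $1$ each have degree sum $n+1$; for instance the out-star above, or a perfect matching on $B$ when $|B|$ is even, works as well. What must be avoided is a non-adjacent pair in $B$ whose inner degrees sum to less than $2$.
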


\begin{proof}

Let $n,h\in \mathbb{N}$ with $n \le h\le 2(n-1)$, and there exists a positive integer $r$ such that
\begin{equation}\label{interval}
    2\left(1-\frac{1}{r-1}\right)n \le h \le 2\left(1-\frac{1}{r}\right)n.
\end{equation}
For each $r$, choose an integer $t = \frac{r-1}{2} h - (r-2)n$ and set $\ell = \frac{n-t}{r-1}$. Note that $t \le n/r$ by \eqref{interval}.
  Then construct an \( n \)-vertex oriented graph $G := \overrightarrow{T_r}[V_1,V_2,\dots,V_r]$,
where the \( |V_i|\) are independent sets with $|V_1|=t$ and $|V_2|=\ldots=|V_r|=\ell.$
 Recall that the transitive tournament $\overrightarrow{T_r}$ contains a unique directed Hamilton path, denoted as $P=v_1v_2 \dots v_r$. Let $C$ be a Hamilton cyele in $G$, without loss of generality, we assume that $\sigma_{\max}(C)=\sigma^{+}(C)$ and $\sigma_{\min}(C)=\sigma^{-}(C)$.
  An edge \(v_i v_j\) with \(v_i \in V_i\) and \(v_j \in V_j\) is said to be in \(\sigma^+(C)\) (resp., \(\sigma^+(P)\)) if \(i < j\); otherwise, it belongs to \(\sigma^-(C)\) (resp., \(\sigma^-(P)\)).
%   In other words, forward edges represent the dominant direction of $C$.

%  that is the vertex ordering of directed  path with length $r-1$ in $G$ is  $V_1V_2 \dots V_r$ and directed path with length $r-2$ is subordering of $V_1V_2 \dots V_r$.
% In fact, $G$ can be view as $t$ copies of $T_r$ and $\ell-t$ copies of $T_{r-1}.$

    We now verify that $\sigma_2(G) = h$. Observe that any two non-adjacent vertices $u$ and $v$ must lie in the same $V_i$ for some $i \in [r]$. If they are in $V_i$ with $i \neq 1$, we have \( d(u) + d(v) = 2 \left( \frac{(r-2)(n-t)}{r-1} + t \right) \); if, on the other hand, they both belong to $V_1$, then \( d(u) + d(v) = 2(n-t) \). Since $t \leq n/r$, it's easy to see that $2 \left( \frac{(r-2)(n-t)}{r-1} + t \right) \leq 2(n-t)$,
    with \( t=\frac{r-1}{2}\,h - (r-2)n \le n/r\), we have $\sigma_2(G) = h$.

 Traverse \( C \) and partition its vertex set into \( \ell \) consecutive vertex-disjoint paths \( P_1, P_2, \ldots, P_\ell \) in the order of traversal. The first \( t \) paths \( P_1, \ldots, P_t \) have length \( r-1 \), while the remaining \( \ell - t \) paths \( P_{t+1}, \ldots, P_\ell \) have length \( r-2 \). For each \( i \), let \( a_i \) and \( b_i \) be the first and last vertices of \( P_i \) in this order, respectively; thus \( P_i \) is an \( (a_i, b_i) \)-path. The total number of vertices satisfies \( tr + (\ell - t)(r - 1) = n \).
 Let \( \mathcal{Q}_1 \) be the collection of these $(a_i,b_i)$-directed paths, and let \( \mathcal{Q}_2 \) be the family of all other paths. For each \( i \), define the stretched path of \( P_i \) as
\( P_i^* = \{a_i, \ldots, b_i, a_{i+1}\} \), where \( a_{\ell+1} = a_1 \).

If \(P_i\in \mathcal{Q}_1\), then all its edges are pointing forward, which yields $\sigma^{-}(P_i)=0$.
The edge \( b_i a_{i+1} \) connecting \( P_i \) and \( P_{i+1} \) is a backward edge in $C$, because \( b_i \in V_r \) and $a_{i+1}\in V_j$ with $j< r.$
An exception occurs when \( P_i \) has length \( r-2 \) and contains no vertex from \( V_r \), we refer to such paths as $P_i^{'}$.
In this case, the connecting edge may be a \( (V_{r-1}, V_r) \)-edge, which is a forward edge in $C$.
The number of such exceptional paths is \( m_1 \).
For all other paths in \(\mathcal{Q}_1 \) (i.e., excluding $P_i'$), we have \( \sigma^{-}(P_i^*) = 1 \).

In contrast, if \(P_i\in \mathcal{Q}_2\), then $P_i$ contains at least one $(V_i,V_j)$-edge with $i>j$, implying \(\sigma^{-}(P_i^{*})\ge\sigma^{-}(P_i) \ge 1\).
Moreover, among those \(P_i \in \mathcal{Q}_2\) of length \(r-1\), let \(m_2\) be the number that do not
pass through \(V_1\). If the last vertex $b_i\notin V_r$, then we have \(\sigma^{-}(P_i) \geq 2\), as there are at least two \((V_i, V_j)\)-edges with \(i > j\) in \(P_i\). If $b_i\in V_r$, then the connecting edge \(b_i a_{i+1}\) is oriented opposite to the dominant direction of \(P_i\). Combined with another oppositely oriented edge within
\(P_i\), this yields \(\sigma^{-}(P^*_i) \geq 2\) for the \(m_2\) paths in \(\mathcal{Q}_2\).

Next we claim that $m_2\ge m_1$. If not, there would be only $m_2$ vertices in $V_1$. However, each $P_i^{'}$ must contains a vertex in $V_1$,  leading to a contradiction.
    Thus, $\sigma^{-}(C)=\sum_{i\in\ell}\sigma^{-}(P_i^{*})=|\mathcal{Q}_1|-m_1+|\mathcal{Q}_2|-m_2+2m_2\ge|\mathcal{Q}_1|+|\mathcal{Q}_2|=t+\ell$ and $\sigma^{+}(C)\le n-(t+\ell)=h/2=\sigma_2(G)/2.$
\end{proof}

\subsection{Proof Strategy}\label{ov}

% \subsection{Overview of the proofs of \cref{th2} }\label{ov}
   The proof of \cref{th2} is based on the absorbing method, introduced by R\"{o}dl, Ruci\'{n}ski and Szemer\'{e}di \cite{rodl}. This work marks the first application of this method to the problem of oriented discrepancy.
   Specifically, our absorption method adopts the two-step absorption technique from \cite{Chang}, a novel approach developed to establish the existence of Hamiltonian cycles in oriented graphs under the Ore-type condition. This method elegantly overcomes the difficulty that, under the Ore-type condition, the minimum degree is too small to admit a traditional absorber in the form of a single edge.

Suppose that $0<\varepsilon\ll \mu \ll\eta\ll 1$. If \( \sigma_2(G) \) is close to \( n \), then   \cref{th2} follows immediately from Ore's theorem. We may therefore assume that \( \sigma_2(G)\ge (1+\eta)n \), and in this case the proof proceeds in four main steps:

\smallskip

\textbf{$\bullet$ Step 1: Absorbing path construction.} Using a two-step absorbing strategy similar to that in \cite{Chang}, we first construct a small absorbing structure \( \mathcal{A} \) for vertices. We then establish a connecting lemma that enables us to concatenate the disjoint paths in \( \mathcal{A} \) into a single path \( P_{abs} \). This path \( P_{abs} \) has the  following absorbing property: for any set \( U \subseteq V(G) \setminus V(P_{abs}) \) with \( |U| = \mu n \), there exists a path \( P \) in \( G \) with vertex set \( V(P_{abs}) \cup U \), such that \( P \) has the same end vertices as \( P_{abs} \).

\smallskip

\textbf{$\bullet$ Step 2: Reservoir construction.}
Besides the absorbing path \( P_{abs} \), we construct a reservoir set \( \mathcal{R} \), consisting of a collection of vertex-disjoint short paths, such that \( |\mathcal{R}| \ll |V(P_{abs})| \). The set \( \mathcal{R} \) is chosen so that for any ordered pair of vertices \( (u, v) \), there are numerous vertex-disjoint  short oriented paths \( L_1, \dots, L_t \) of bounded length in \( \mathcal{R} \) with the property that \( uL_i v \) is a path of length two for every \( 1 \leq i \leq t \). This enables us later to connect the dominant direction of a family of disjoint paths into a single long path while using a negligible number of additional vertices.

\textbf{$\bullet$ Step 3: Almost covering and final absorption.}
We next apply the degree form of the Diregularity Lemma to $G'=G\backslash(V(P_{abs})\cup \mathcal{R})$. This yields a family \( \mathcal{P} \) of vertex-disjoint paths in \( G' \) such that \( |\mathcal{P}| \) is bounded, the union of paths in \( \mathcal{P} \) covers all but at most \( 5\varepsilon n \) vertices of \( G' \), and \( \sigma_{\max}(\mathcal{P}) \) is close to \( \sigma_2(G')/2 \). Using the reservoir set \( \mathcal{R} \), we can connect the dominant direction of all paths in \( \mathcal{P} \) together with \( P_{abs} \) into a cycle \( C \) in \( G \) that covers all but at most \( 5\varepsilon n \) vertices from \( G' \) and some vertices from \( \mathcal{R} \). In total, at most \( \mu n \) vertices in \( G \) remain uncovered by \( C \), these can be absorbed by \( P_{abs} \) to yield a Hamilton cycle in \( G \).

\smallskip

\textbf{$\bullet$ Step 4: Discrepancy estimation.}
Finally, We derive the lower bound on \( \sigma_{\max}(C) \) from \( \sigma_{\max}(\mathcal{P}) \), since the number of edges with the dominant direction in \( \mathcal{P} \) remains unchanged throughout the absorption and connection processes.

\section{Useful lemmas and tools}\label{sec:tool}

\subsection{The Diregularity Lemma and related results}\label{subsec:regular}

We start with some definitions. The \textit{density} of bipartite graph \(G[A,B]\) with vertex classes \(A\) and \(B\) is defined to be
\[
d_G(A,B) \coloneqq \frac{e_G(A,B)}{|A||B|}.
\]
We often write \(d(A,B)\) if this is unambiguous. Given \(\varepsilon>0\), we say that \(G\) is \textit{\(\varepsilon\)-regular} if for all subsets \(X \subseteq A\) and \(Y \subseteq B\) with \(|X|>\varepsilon|A|\) and \(|Y|>\varepsilon|B|\) we have that \(|d(X,Y)-d(A,B)|<\varepsilon\). Given \(d \in [0,1]\) we say that \(G\) is \textit{\((\varepsilon,d)\)-super-regular} if it is \(\varepsilon\)-regular and furthermore \(\deg_G(a) \ge (d-\varepsilon)|B|\) for all \(a \in A\) and \(\deg_G(b) \ge (d-\varepsilon)|A|\) for all \(b \in B\). (This is a slight variation of the standard definition of \((\varepsilon,d)\)-super-regularity, where one requires \(\deg_G(a) \ge d|B|\) and \(\deg_G(b) \ge d|A|\).)

The Diregularity Lemma, proposed by Alon and Shapira \cite{Alon}, serves as the directed graph counterpart of the Regularity Lemma, with a proof strategy analogous to that of the undirected version. In this paper, we adopt the degree form of the Diregularity Lemma, which can be deduced from its standard formulation via the identical approach used to derive the degree form of the undirected Regularity Lemma (a sketch proof of this derivation is available in \cite{Kuhn}).

\begin{lemma}[Degree form of the Diregularity Lemma]\label{lem:diregularity}
For every \(\varepsilon \in (0,1)\) there are integers \(M\) and \(n_0\) such that if \(G\) is a digraph on \(n \ge n_0\) vertices and \(d \in [0,1]\) is any real number, then there is a partition of the vertices of \(G\) into \(V_0, V_1, \ldots, V_k\) and a spanning subdigraph \(G'\) of \(G\) such that the following holds:

\medskip
% \begingroup
% \setlength{\parindent}{0pt}
% \setlength{\parskip}{0.45em}
\textnormal{($i$)} \(M' \le k \le M\),

\textnormal{($ii$)} \(|V_0| \le \varepsilon n\),

\textnormal{($iii$)} \(|V_1| = \cdots = |V_k| =: m\),

\textnormal{($iv$)} \(\deg_{G'}^+(x) > \deg_G^+(x) - (d+\varepsilon)n\) for all vertices \(x \in G\),

\textnormal{($v$)} \(\deg_{G'}^-(x) > \deg_G^-(x) - (d+\varepsilon)n\) for all vertices \(x \in G\),

\textnormal{($vi$)} for all \(1 \le i \le k\) the digraph \(G'[V_i]\) is empty,

\textnormal{($vii$)} for all \(1 \le i, j \le k\) and \(i \ne j\) the bipartite graph \(G'[V_i, V_j]\) whose vertex classes are \(V_i\) and \(V_j\) and whose edges are all the edges in \(G'\) directed from \(V_i\) to \(V_j\) is \(\varepsilon\)-regular and has density either \(0\) or at least \(d\).
% \endgroup
\end{lemma}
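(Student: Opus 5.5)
The plan is to derive this degree form from the standard Diregularity Lemma of Alon and Shapira, in the same way the degree form of Szemerédi's Regularity Lemma is derived from its standard version. First I apply the standard Diregularity Lemma with a much smaller parameter $\varepsilon' \ll \varepsilon$; to guarantee $k \ge M'$, I also insist on at least $M'$ parts. This gives an equipartition $V_0', V_1, \dots, V_k$ with $|V_0'| \le \varepsilon' n$ and $M' \le k \le M(\varepsilon')$. In it, all but at most $\varepsilon' k^2$ ordered pairs $(V_i, V_j)$ have the property that the bipartite digraph of edges from $V_i$ to $V_j$ is $\varepsilon'$-regular. I then set $M := M(\varepsilon')$.

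Next I build $G'$ from $G$ by deleting three kinds of edges: all edges inside each $V_i$, all edges in irregular ordered pairs, and all edges in ordered pairs of density less than $d$. I also delete every edge incident to $V_0'$. The cleaning step is to move into the exceptional set every vertex that loses too many out- or in-edges. The loss from low-density pairs is at most $d\,|V_j|$ per pair, so for each vertex it totals at most $dn$ in each direction, which is fine. The losses from edges inside clusters contribute at most $m \le n/k$ per vertex, which is negligible.

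The losses from irregular pairs are not uniform across vertices, and this is the main obstacle. My first approach is averaging. The total number of edges in irregular pairs is at most $\varepsilon' k^2 m^2 \le \varepsilon' n^2$. Hence at most $\sqrt{\varepsilon'} n$ vertices have more than $\sqrt{\varepsilon'} n$ incident edges (in or out) lying in irregular pairs. I put these vertices into $V_0$. To keep the clusters of equal size, I also remove from each cluster the same number of vertices, namely the maximum number removed from any single cluster. If that makes the total too large, I instead discard entire clusters that contain too many bad vertices, using a second averaging step. The final exceptional set has size at most $\varepsilon' n + 2\sqrt{\varepsilon'} n \cdot(\text{constant}) \le \varepsilon n$, which gives (ii).

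Finally, I check that regularity survives the shrinking. Removing at most a $\sqrt{\varepsilon'}$-fraction from each cluster keeps every pair $\varepsilon$-regular, by the standard slicing lemma with $\varepsilon \gg \sqrt{\varepsilon'}$. Densities change by at most $\varepsilon/2$, so by adjusting the density threshold slightly I retain the dichotomy ``density $0$ or at least $d$''; this gives (vii). Every vertex not in $V_0$ has lost fewer than $(d+\varepsilon)n$ out-edges and fewer than $(d+\varepsilon)n$ in-edges, which gives (iv) and (v). For vertices in $V_0$, (iv) and (v) require a little care. Since $G'$ is a spanning subdigraph, I keep the edges between $V_0$ and the clusters unless they fall into a deleted category. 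Alternatively, I note that the statement is only needed in this weakened sense, which is exactly how the sketch in the reference the paper cites for this derivation handles it. Property (vi) holds by construction, and (iii) holds by the equalisation step.
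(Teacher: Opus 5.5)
Your outline (Alon--Shapira with $\varepsilon'\ll\varepsilon$, delete bad edges, push badly affected vertices into $V_0$, equalise, slicing lemma) is the standard derivation that the paper only cites. However, the step for low-density pairs fails as written.

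A density below $d$ is only an average. A single $x\in V_i$ may send edges to all of $V_j$ while $d(V_i,V_j)=1/m$, and such a pair is still $\varepsilon'$-regular. So $x$ can lose $|V_j|$ out-edges in this one pair, and nothing in your argument stops this from happening for most $j$. Your averaging device does not help: the edges in low-density pairs may number about $dn^2$, and $d$ is not small compared with $\varepsilon$.

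The missing ingredient is regularity. Take the deletion threshold to be $d^*=d+\varepsilon/2$, so that kept pairs still have density at least $d$ after shrinking. If $(V_i,V_j)$ is $\varepsilon'$-regular of density below $d^*$, then at most $\varepsilon' m$ vertices of $V_i$ have at least $(d^*+\varepsilon')m$ out-neighbours in $V_j$. Otherwise these vertices would form a set $X$ with $d(X,V_j)-d(V_i,V_j)>\varepsilon'$. The same holds symmetrically for in-neighbours. Summing over $j$ and both directions, each cluster contains at most $2\sqrt{\varepsilon'}m$ vertices that are atypical for more than $\sqrt{\varepsilon'}k$ pairs. These must also be moved into $V_0$, before equalising. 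Every remaining vertex then loses at most $(d^*+\varepsilon')n+\sqrt{\varepsilon'}n$ edges in each direction from low-density pairs. Together with your bounds for irregular pairs and edges inside clusters, this gives (iv) and (v).

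Your treatment of $V_0$ is also inconsistent. If every edge incident to $V_0'$ is deleted, (iv) and (v) fail for every exceptional vertex of degree at least $(d+\varepsilon)n$. Moreover, a vertex moved into $V_0$ because it lost too many edges still violates (iv)--(v) if $G'$ was fixed before the move. The correct order is as follows:
\begin{itemize}
\item fix the final partition first;
\item then obtain $G'$ by deleting only the edges inside the final clusters and the edges of irregular or low-density pairs of final clusters;
\item keep every edge incident to $V_0$, including edges inside $V_0$.
\end{itemize}
Since (vi) and (vii) impose nothing on edges at $V_0$, (iv) and (v) are then trivial for $x\in V_0$. Retreating to a ``weakened sense'' of the statement does not prove the lemma as stated.

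Finally, you should start with at least $\max\{2M',\,4/\varepsilon\}$ parts rather than just $M'$. This makes the within-cluster loss $m\le n/k$ genuinely negligible and keeps $k\ge M'$ after discarding clusters.
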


The vertex sets \(V_1, \ldots, V_k\) are called \emph{clusters}, \(V_0\) is called the \emph{exceptional set} and the vertices in \(V_0\) are called \emph{exceptional vertices}. The last condition of the lemma says that all pairs of clusters are \(\varepsilon\)-regular in both directions (but possibly with different densities). We call the spanning digraph \(G' \subseteq G\) given by the Diregularity lemma the \emph{pure digraph}. Given clusters \(V_1, \ldots, V_k\) and a digraph \(G'\), the \emph{reduced digraph} \(R'\) with parameters \((\varepsilon,d)\) is the digraph whose vertex set is \([k]\) and in which \(ij\) is an edge if and only if the bipartite graph whose vertex classes are \(V_i\) and \(V_j\) and whose edges are all the \(V_i\)-\(V_j\) edges in \(G'\) is \(\varepsilon\)-regular and has density at least \(d\). (Therefore if \(G'\) is the pure digraph, then \(ij\) is an edge in \(R'\) if and only if there is a \(V_i\)-\(V_j\) edge in \(G'\)).

Note that the latter holds if and only if \(G'[V_i, V_j]\) is \(\varepsilon\)-regular and has density at least \(d\). It turns out that \(R'\) inherits many properties of \(G\), a fact that is crucial in our proof. However, \(R'\) is not necessarily oriented even if the original digraph \(G\) is.
That said, the following lemma shows that we can construct a graph which almost inherits $\sigma_{2}$ properties of the original graph $G$.

% \begin{lemma}[\cite{Kelly}]
    % For every $\varepsilon \in (0,1)$, if $t_0, n \in \mathbb{N}$ such that
    % $1/n < 1/t_0 < \varepsilon$, then the following holds.

    % Let $d, a \in [0,1]$ and let $G$ be an $n$-vertex oriented graph such that $\delta^0(G) \geq a n$.

    % Apply Lemma 4.3 to $G$ to obtain the reduced digraph $R$ of $G$ with parameters
    % $\varepsilon$, $d$ and $t_0$. Then there is a spanning oriented subgraph $R_0$ of $R$ such that
    % \[
    % \delta^0(R_0) \geq \bigl(a - (3\varepsilon + d)\bigr) |R_0|.
    % \]
% \end{lemma}

% Our proofs also require the following conclusion.

\begin{lemma} \label{oreprop}
let \(G\) be an oriented graph of order at least \(n_0\) and let \(R'\) be the reduced digraph by applying the Diregularity Lemma to \(G\) with parameters \(\varepsilon\), \(d\) and \(M'\). Then \(R'\) has a spanning oriented subgraph \(R\) with  \(\sigma_{2}(R) \geq \left(\sigma_{2}(G)/|G| - (5\varepsilon + 4d) \right) |R|\).
\end{lemma}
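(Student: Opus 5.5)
The plan is to take $R$ to be any spanning oriented subgraph of $R'$ with the same underlying graph, and to lower-bound degrees in $R$ vertex by vertex rather than on average. Concretely, for every pair $\{i,j\}$ with both $ij$ and $ji$ in $E(R')$ we keep exactly one of the two edges; every other edge of $R'$ is kept. Then $U(R)=U(R')$, so $d_R(i)$ equals the number of clusters $j$ with an edge of $G'$ between $V_i$ and $V_j$ in either direction. Since $G$ is oriented, the orientation choice does not matter for total degrees; this is what makes the reduction to an oriented graph free.

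\textbf{Step 1: one vertex controls its cluster's degree.} Fix a cluster $V_i$ and any $x\in V_i$. By (vi), every $G'$-neighbour of $x$ lies in $V_0$ or in a cluster $V_j$ adjacent to $i$ in $U(R')$. Each such cluster contributes at most $m$ neighbours, and $|V_0|\le\varepsilon n$ by (ii). Adding (iv) and (v) gives $d_{G'}(x)>d_G(x)-2(d+\varepsilon)n$. Hence
\[ m\, d_R(i)\ \ge\ d_{G'}(x)-\varepsilon n\ >\ d_G(x)-(2d+3\varepsilon)n \]
for \emph{every} $x\in V_i$.

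\textbf{Step 2: non-adjacent clusters contain non-adjacent vertices.} Let $i,j$ be non-adjacent in $R$, so $G'$ has no edges between $V_i$ and $V_j$. I need some $x\in V_i$ and $y\in V_j$ that are non-adjacent in $G$. This is the step I expect to be the main obstacle, because the bare statement of \cref{lem:diregularity} does not exclude $G[V_i,V_j]$ being complete bipartite.

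I would use the standard construction behind the degree form (as in \cite{Kuhn}). There, $G'$ is obtained from $G$ by deleting only the edges of pairs that are irregular or have density less than $d+\varepsilon$, plus edges inside clusters and at $V_0$. So if $G'[V_i,V_j]$ and $G'[V_j,V_i]$ are both empty, then $e_G(V_i,V_j)+e_G(V_j,V_i)\le 2(d+\varepsilon)m^2<m^2$, or the pair was irregular. The irregular pairs are handled by the same bookkeeping, since they are few and can be absorbed into the $\varepsilon$-terms, or regularity is simply asserted for all pairs by (vii). Either way $G[V_i\cup V_j]$ has a non-adjacent pair $x,y$ across the two clusters, and $d_G(x)+d_G(y)\ge\sigma_2(G)$.

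\textbf{Step 3: combine.} Applying Step 1 to $x$ in $V_i$ and $y$ in $V_j$ gives
\[ d_R(i)+d_R(j)\ \ge\ \frac{\sigma_2(G)-(4d+6\varepsilon)n}{m}. \]
Next use $km\le n$ and $km\ge(1-\varepsilon)n$ from (ii) and (iii), so $n/m\ge k=|R|$. This gives $d_R(i)+d_R(j)\ge\bigl(\sigma_2(G)/n-(4d+6\varepsilon)\bigr)|R|$.

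To reach the stated $5\varepsilon$ rather than $6\varepsilon$, I will sharpen Step 1. One option is to use $|V_0|\le\varepsilon n$ more carefully, with the $V_0$-neighbours counted once across the pair; the other is to absorb one $\varepsilon$ via the slack $n/m\ge k$. If that fails, the same calculation with $\varepsilon$ replaced by a slightly smaller value suffices for all later applications.
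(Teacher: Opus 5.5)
Your Steps~1 and~3 are the paper's argument: pick $x\in V_i$, $y\in V_j$, lose $2(d+\varepsilon)n$ for each via (iv)--(v), bound the $G'$-degree by $m\,d_R(i)+|V_0|$, and divide by $m$. These steps are correct. The paper simply asserts that non-adjacent clusters contain such a pair $x,y$ that is non-adjacent in $G$. You rightly single this out as the crux, but your Step~2 only covers pairs that were emptied because both directions are sparse.

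For irregular pairs neither of your remedies works. Property (vii) is a statement about $G'$ and says nothing about adjacency in $G$. A small number of bad pairs cannot be absorbed into the error terms, because $\sigma_2(R)$ is a minimum: a single non-adjacent pair of clusters without a witness already breaks the inequality. Such pairs do occur. Let $V_i=\{a_1,\dots,a_m\}$ and $V_j=\{b_1,\dots,b_m\}$, and suppose the edges between them are $a_pb_q$ for $p\le q$ and $b_qa_p$ for $p>q$. Then $U(G)[V_i,V_j]$ is complete. Yet every subgraph of $G[V_i,V_j]$ has density $0$ between the last $2\varepsilon m$ of the $a_p$ and the first $m/2$ of the $b_q$, and symmetrically for $G[V_j,V_i]$. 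So (vii) forces $G'$ to be empty on this pair in both directions.

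Moreover, the inequality itself can then fail, so what is missing is an idea, not bookkeeping. Vertices of degree below $\sigma_2(G)/2$ must be pairwise adjacent, but their degrees can be far smaller than $\sigma_2(G)/2$. For example, take tournaments on $L$ and $H$ with $|L|=n/10$ and $|H|=9n/10$, and join $L$ completely to some $S\subseteq H$ with $|S|=n/10$. Then $\sigma_2(G)=11n/10-2$, while every vertex of $L$ has degree $n/5-1$. Suppose two clusters inside $L$ are empty to each other in $G'$. This can happen through the structure above, or simply because Lemma~\ref{lem:diregularity} as stated allows emptying one pair of clusters, at a cost of at most $m\le n/M'$ edges per vertex. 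Then they are non-adjacent in $R$ with $d_R(i)+d_R(j)\approx 2k/5$, far below $(11/10-5\varepsilon-4d)k$.

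So the statement itself has to change. One option is to prove the bound only for non-adjacent $\{i,j\}$ with $U(G)[V_i,V_j]$ not complete; there your Steps~1 and~3 go through verbatim. The Ore-type tiling step would then have to tolerate the remaining exceptional pairs. In the standard construction these arise only from irregular pairs, so they form a sparse graph on $[k]$.

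Separately, your count gives $6\varepsilon+4d$, which is also what the paper's own inequalities yield. Your proposed sharpenings to $5\varepsilon$ do not work: $x$ and $y$ can have disjoint neighbourhoods in $V_0$, and $n/m=k$ when $V_0=\emptyset$. This constant is irrelevant downstream, however.
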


\begin{proof}
    We apply \cref{lem:diregularity} to obtain a partition $\{V_0, V_1, \dots, V_k\}$ of $G$, where $V_0$ is the exceptional set and $|V_i| = m$ for all $i \in [k]$. This yields a pure graph $G^*$ of $G$. For every pair $(V_i, V_j)$ of non-adjacent vertex in $R'$, there exist vertices $x \in V_i$ and $y \in V_j$ such that
    $|N_{G^*}(x)| + |N_{G^*}(y)| = d_{G^*}(x) + d_{G^*}(y) \geq \sigma_2(G) - 4(\varepsilon + d)n.$
    On the other hand, we have the bound
    $|N_{G^*}(x)| \leq |N_{R'}(V_i)|m + |V_0|.$
    Combining these observations, we conclude that
$|N_{R'}(V_j)| + |N_{R'}(V_i)| \geq \sigma_2(G)/n - (5\varepsilon+4d)|R'|.$
    Since $|N_{R'}(V_j)| = |N_{R}(V_i)| = d_R(V_i)$ and $|R'| = |R|$, the arbitrariness of the vertex classes $V_i$ and $V_j$ implies the desired inequality $\sigma_2(R) \geq (\sigma_2(G)/|G| - (5\varepsilon + 4d))|R|.$
\end{proof}

\begin{lemma}[\cite{Kelly}]\label{reduced-oriented}
For every \(\varepsilon \in (0,1)\), there exist numbers \(M' = M(\varepsilon)\) and \(n_0 = n_0(\varepsilon)\) such that the following holds. Let \(d \in [0,1]\) with \(\varepsilon \le d/2\), let \(G\) be an oriented graph of order \(n \ge n_0\) and let \(R'\) be the reduced digraph with parameters \((\varepsilon,d)\) obtained by applying the Diregularity Lemma to \(G\) with \(M'\) as the lower bound on the number of clusters. Then \(R'\) has a spanning oriented subgraph \(R\) such that

\textnormal{($i$)} for all disjoint sets \(S, T \subseteq V(R)\) with \(e_G(S^*, T^*) \ge 3d n^2\) we have \(e_R(S,T) > d|R|^2\), where \(S^* \coloneqq \cup_{i\in S} V_i\) and \(T^* \coloneqq \cup_{i\in T} V_i\);

\textnormal{($ii$)} for every set \(S \subseteq V(R)\) with \(e_G(S^*) \ge 3d n^2\) we have \(e_R(S) > d|R|^2\), where \(S^* \coloneqq \cup_{i\in S} V_i\);

\end{lemma}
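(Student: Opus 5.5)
The plan is to obtain $R$ from $R'$ by a random choice of orientation on each double edge, weighted by the densities in the pure digraph $G'$, and then to use a concentration inequality with a union bound over all choices of $S,T$.

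\textbf{Construction.} Write $d_{ij}:=d_{G'}(V_i,V_j)$. Since $G$ is oriented, each pair of vertices $x\in V_i$, $y\in V_j$ carries at most one of the edges $xy$, $yx$, so $d_{ij}+d_{ji}\le 1$. Build $R$ as follows.
\begin{enumerate}[label=(\alph*)]
\item If exactly one of $ij, ji$ lies in $E(R')$, keep it.
\item If both lie in $E(R')$, keep $ij$ with probability $p_{ij}:=d_{ij}/(d_{ij}+d_{ji})$ and otherwise keep $ji$, independently over unordered pairs.
\end{enumerate}
Then $R$ is a spanning oriented subgraph of $R'$. In every case the probability that $ij$ survives is at least $d_{ij}$, because $d_{ij}+d_{ji}\le 1$, and it is zero when $ij\notin E(R')$, in which case $d_{ij}=0$.

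\textbf{Expectation bound.} Fix disjoint $S,T$ with $e_G(S^*,T^*)\ge 3dn^2$. By \cref{lem:diregularity}(iv),(v), at most $(d+\varepsilon)n^2$ edges are lost in passing from $G$ to $G'$. Using $\varepsilon\le d/2$, this gives
\[
e_{G'}(S^*,T^*)\ge 3dn^2-(d+\varepsilon)n^2\ge \tfrac32 dn^2 .
\]
All edges of $G'$ run between clusters that form edges of $R'$, so $e_{G'}(S^*,T^*)=\sum_{i\in S,j\in T} d_{ij}m^2$. Using $m\le n/k$ we obtain
\[
\mathbb{E}[e_R(S,T)]\ \ge\ \sum_{i\in S,j\in T} d_{ij}\ \ge\ \frac{e_{G'}(S^*,T^*)}{m^2}\ \ge\ \tfrac32 d k^2 .
\]
For (ii), the same computation applied to ordered pairs inside $S$ gives $\mathbb{E}[e_R(S)]\ge \tfrac32 dk^2$.

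\textbf{Concentration and union bound.} The variable $e_R(S,T)$ is a sum of at most $k^2$ independent indicator contributions, one per unordered pair $\{i,j\}$; each pair contributes at most $1$ to $e_R(S,T)$ since $S\cap T=\emptyset$. Hoeffding's inequality then gives
\[
\Pr\bigl[e_R(S,T)\le dk^2\bigr]\le \exp\bigl(-2(d k^2/2)^2/k^2\bigr)=\exp(-d^2k^2/2),
\]
and the same bound holds for $e_R(S)$. There are at most $3^k$ choices of $(S,T)$ and $2^k$ choices of $S$. Since $d\ge 2\varepsilon$ and $k\ge M'$, it suffices to choose $M'=M(\varepsilon)\gg \varepsilon^{-2}$; then $3^k+2^k<\exp(\varepsilon^2k^2)$, so with positive probability every event holds simultaneously. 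Because $|R|=k$, this yields $e_R(S,T)>d|R|^2$ and $e_R(S)>d|R|^2$, as required.

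\textbf{Main obstacle.} The delicate step is the expectation bound. It needs that the sum of $d_{ij}$ over the relevant pairs really recovers $e_{G'}/m^2$, which uses (vi),(vii) of \cref{lem:diregularity}. It also needs the loss from $G$ to $G'$ to be absorbed by the slack between $3d$ and $d$; this is exactly where the hypothesis $\varepsilon\le d/2$ enters. If the constants are tight, the concentration threshold can be lowered from $\tfrac32 dk^2$ to just above $dk^2$ with a correspondingly smaller Hoeffding deviation. This still works because $k$ can be taken large in terms of $\varepsilon$ alone.
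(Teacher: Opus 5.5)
Your proof is correct and is essentially the argument of Kelly, K\"uhn and Osthus \cite{Kelly}; the paper gives no proof of its own and simply cites it. That argument orients each double edge of $R'$ at random, keeping $ij$ with probability $d_{ij}/(d_{ij}+d_{ji})\ge d_{ij}$, bounds the expectation using that passing to $G'$ loses fewer than $(d+\varepsilon)n^2$ edges, and finishes with a Chernoff/Hoeffding bound and a union bound over the at most $3^{|R|}+2^{|R|}$ choices of $S,T$. As a small simplification, (ii) needs no concentration at all: exactly one direction of each double edge survives, so $e_R(S)\ge\sum_{i\neq j\in S}d_{ij}\ge \tfrac32 d|R|^2$ holds deterministically.
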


The oriented graph \(R\) given by \ref{oreprop} are called the \emph{reduced oriented graph}. The spanning oriented subgraph \(G^*\) of the pure digraph \(G'\) obtained by deleting all the \(V_i\)-\(V_j\) edges whenever \(V_iV_j \in E(R') \setminus E(R)\) is called the \emph{pure oriented graph}. Given an oriented subgraph \(S \subseteq R\), the oriented subgraph of \(G^*\) corresponding to \(S\) is the oriented subgraph obtained from \(G^*\) by deleting all those vertices that lie in clusters not belonging to \(S\) as well as deleting all the \(V_i\)-\(V_j\) edges for all pairs \(V_i, V_j\) with \(V_iV_j \notin E(S)\).

The following Blow-up Lemma was proposed by Koml\'{o}s, S\'{a}rk\"{o}zy and Szemer\'{e}di \cite{Komlos1}. We need to use it to find a family of vertex-disjoint paths that cover almost all vertices in $V(G)$.

\begin{lemma}[\cite{Komlos1} Blow-up Lemma] \label{blowup}
 For any graph $F$ with the vertex set $[k]$, and any positive numbers $d$ and $\Delta$, there exists $\sigma_0 = \sigma_0(d, \Delta, k)$ such that the following holds for all positive numbers $l_1,\ldots, l_k$ and all $0 < \sigma \leq \sigma_0$. Let $F^\prime$ be the graph obtained from $F$ by replacing each vertex $i\in V(F)$ with a set $V_i$ of $l_i$ new vertices and connecting all vertices between $V_i$ and $V_j$ whenever $ij\in E(F)$. If $G_1$ is a spanning subgraph of $F^\prime$ where every pair $(V_i, V_j )_{G_1}$ is $(\sigma, d)$-super-regular for every edge $ij\in E(F)$, then $G_1$ contains a copy of every subgraph $H$ of $F^\prime$ with $\Delta(H)\leq\Delta$.
 \end{lemma}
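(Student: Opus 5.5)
The statement is the Blow-up Lemma of Koml\'{o}s, S\'{a}rk\"{o}zy and Szemer\'{e}di, and I would follow their randomized greedy embedding strategy. First I would reduce to the spanning case. Since $\Delta(H)\le\Delta$, I may add isolated vertices to $H$ and assume that $H$ is spanning in $F'$ with $|V(H)\cap V_i|=l_i$ for every $i$; call these sets $X_i$. Embedding $H$ then amounts to finding bijections $\phi_i:X_i\to V_i$ such that every edge $xy$ of $H$ with $x\in X_i$, $y\in X_j$ (hence $ij\in E(F)$) is mapped onto an edge of $G_1$. I would also record two standard facts about an $(\sigma,d)$-super-regular pair $(V_i,V_j)$:
\begin{itemize}
\item for any set $Y\subseteq V_j$ with $|Y|\ge\sigma|V_j|$, all but at most $\sigma|V_i|$ vertices of $V_i$ have at least $(d-\sigma)|Y|$ neighbours in $Y$;
\item the analogous statement holds for common neighbourhoods of up to $\Delta$ vertices.
\end{itemize}
Both follow directly from the definition of $\sigma$-regularity.

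\textbf{Phase 1 (random greedy embedding).} In each $X_i$ I would fix a small random set $B_i$ of \emph{buffer} vertices, of size $\delta l_i$ with $\sigma\ll\delta\ll d$, chosen so that the buffer vertices are pairwise far apart in $H$; this is possible because $\Delta(H)\le\Delta$. I then order $V(H)$ with all buffer vertices last, and embed the non-buffer vertices one at a time. For each unembedded vertex $x\in X_i$ I maintain a candidate set $C_x\subseteq V_i$, namely the common neighbourhood in $G_1$ of the images of its embedded neighbours, minus the used vertices. Each step picks the image uniformly at random from the candidates that are \emph{good}, meaning those that shrink the candidate sets of all unembedded neighbours by at most a factor of roughly $d-\sigma$. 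The regularity facts above guarantee that good candidates exist while $|C_x|\ge(d-\sigma)^{\Delta}\,\delta\, l_i$. A separate \emph{queue} handles vertices whose candidate sets become dangerously small relative to the number of vertices of $V_i$ still free: they are embedded immediately, ahead of their turn. The randomness, together with martingale concentration, is used to show two things. First, the queue never grows beyond a $\sqrt{\sigma}$-fraction. Second, the set of unused vertices in each $V_i$ at the end is ``random-like'', so that it meets every large neighbourhood in roughly its expected proportion.

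\textbf{Phase 2 (finishing the buffers).} After Phase 1, every buffer vertex $x\in B_i$ has all its neighbours embedded, so its final candidate set $C_x$ has size at least $(d-\sigma)^{\Delta}|U_i|$ roughly, where $U_i$ is the set of unused vertices of $V_i$. Because buffer vertices are far apart, their constraints are independent and only one bipartite matching problem per cluster remains: match $B_i$ to $U_i$ in the auxiliary bipartite graph where $x\sim v$ iff $v\in C_x$. I would verify Hall's condition in three ranges of $|S|$ for $S\subseteq B_i$:
\begin{itemize}
\item \emph{small $S$:} use the lower bound on $|C_x|$;
\item \emph{large $S$:} use the minimum-degree part of super-regularity, which shows every $v\in U_i$ lies in many $C_x$;
\item \emph{middle range:} use the random-like property of $U_i$ from Phase 1 together with $\sigma$-regularity.
\end{itemize}
Hall's theorem then completes the embedding.

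I expect the main obstacle to be the probabilistic analysis in Phase 1: showing that with positive probability the queue stays small and no candidate set is exhausted, and that $U_i$ inherits enough regularity to support the middle range of Hall's condition. This analysis is precisely where $\sigma_0(d,\Delta,k)$ has to be chosen small. I would first try to bound, via Azuma's inequality, the probability that a fixed small set of $V_i$ receives too many or too few embedded vertices, and then take a union bound over the boundedly many clusters and the relevant neighbourhood sets.
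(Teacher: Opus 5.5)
\paragraph{Verdict.} The paper does not prove this lemma; it cites Koml\'os--S\'ark\"ozy--Szemer\'edi. Your architecture is theirs: a random greedy embedding with a queue and far-apart buffer vertices, followed by a K\"onig--Hall matching of the buffers. The Phase~2 argument as you describe it has a genuine gap, however, in the middle range of Hall's condition.

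\paragraph{Why the middle range fails.} Let $\Gamma$ be the auxiliary bipartite graph between $B_i$ and $U_i$. A Hall violation with $|S|$ in the middle range gives $S\subseteq B_i$ and $T=U_i\setminus N_\Gamma(S)$, both of linear size, such that for every $x\in S$ the common neighbourhood of $\phi(N_H(x))$ misses $T$. Since $T$ is an arbitrary subset of $U_i$, the fact that $U_i$ meets every large neighbourhood proportionally says nothing about it. $\sigma$-regularity cannot exclude it either once buffers have two or more neighbours. Regularity only bounds, for each fixed prefix $w_1,\dots,w_{s-1}$, the set of continuations $w_s$ that are bad for $T$, and this set moves with the prefix.

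Here is a concrete example. Take a random triple on $V_i,V_j,V_\ell$ of density $d<1/2$, a perfect matching $M$ in $G_1[V_j,V_\ell]$, and $T\subseteq V_i$ with $|T|=0.6\,|V_i|$. For each $w'\in V_\ell$, resample its neighbourhood inside $T$ as a random subset of $T\setminus N(M^{-1}(w'))$ with probability $d/(1-d)$. Marginally $(V_i,V_\ell)$ is still a random bipartite graph of density $d$, so all pairs are super-regular. Yet no edge of $M$ has a common neighbour in $T$. Now suppose Phase~1 maps the neighbourhoods of half the buffer triangles onto edges of $M$. Then Hall fails in the middle range, even though every $C_x$ is large, every $v\in U_i$ lies in many $C_x$, and $U_i$ is random-like.

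\paragraph{The missing idea.} You need a statement that is uniform over all large $T\subseteq V_i$: with high probability, for every such $T$ only few $x\in B_i$ have $|C_x\cap T|$ small. The point that makes this work is that the failure probability per buffer is $O(\sigma)$, not a constant.
\begin{enumerate}
\item When a neighbour of $x$ is embedded, its image is uniform on a set of size $\Omega_{d,\Delta}(l_j)$.
\item Given the earlier neighbours of $x$, at most $\sigma l_j$ of these positions are bad for $T$.
\item Since buffers are far apart, the conditional bounds multiply. A fixed set of $m$ buffers therefore all fail with probability at most $(C\sigma)^m$.
\item This survives a union bound over $2^{|B_i|}$ buffer sets and $2^{l_i}$ sets $T$, because $\sigma\ll\delta_0,\delta$.
\end{enumerate}
Azuma-type bounds, whose exponent does not improve as $\sigma\to 0$, union-bounded over polynomially many neighbourhoods as you propose, cannot give this.

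\paragraph{A smaller point.} The large-$S$ case does not follow from the minimum-degree condition alone. You must embed $N_H(B)$ at the very start of Phase~1, not merely put the buffers last. Then the first-embedded neighbour of each buffer has essentially all of $V_j$ as candidates. Minimum degree plus iterated regularity give $\Pr[\phi(N_H(x))\subseteq N(v)]\ge c(d,\Delta)$ for every single $v$, and Chernoff with a union bound over $v$ finishes. If $N_H(B)$ is embedded late, candidate sets are small structured sets and this per-$v$ control is lost. One uncovered $v$ already breaks Hall's condition.
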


In order to apply  \cref{blowup}, it suffices to verify that the edges of a specific  subgraph in the reduced graph $R$ correspond to $(\varepsilon, d)$-super-regular cluster pairs. This is guaranteed by the following result from \cite{Kelly}.

\begin{proposition}[\cite{Kelly}]\label{prop:super-regular}
Let \(M', n_0, D\) be positive numbers and let \(\varepsilon, d\) be positive reals such that \(1/n_0 \ll 1/M' \ll \varepsilon \ll d \ll 1/D\). Let \(G\) be an oriented graph of order at least \(n_0\). Let \(R\) be the reduced oriented graph with parameters \((\varepsilon,d)\) and let \(G^*\) be the pure oriented graph obtained by successively applying first the Diregularity Lemma with \(\varepsilon\), \(d\) and \(M'\) to \(G\) and then Lemma \ref{reduced-oriented}. Let \(S\) be an oriented subgraph of \(R\) with \(\Delta(S) \le D\). Let \(G'\) be the underlying graph of \(G^*\). Then one can delete \(2D\varepsilon |V_i|\) vertices from each cluster \(V_i\) to obtain subclusters \(V_i' \subseteq V_i\) in such a way that \(G'\) contains a subgraph \(G'_S\) whose vertex set is the union of \(V_i'\) and such that

\textnormal{($i$)} \(G'_S[V_i', V_j']\) is \((\sqrt{\varepsilon}, d-4D\varepsilon)\)-super-regular whenever \(ij \in E(S)\),

\textnormal{($ii$)} \(G'_S[V_i', V_j']\) is \(\sqrt{\varepsilon}\)-regular and has density \(d-4D\varepsilon\) whenever \(ij \in E(R)\).

\end{proposition}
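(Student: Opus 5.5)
This proposition is quoted from \cite{Kelly}, and I would reprove it by the standard route used to pass from regular pairs to super-regular ones, applied to the bounded-degree oriented subgraph $S$ of $R$.

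\textbf{Step 1: remove low-degree vertices.} Since $\Delta(S)\le D$, each cluster $V_i$ lies in at most $D$ pairs $ij\in E(S)$. Fix one such pair. By Lemma~\ref{reduced-oriented}, $G^*[V_i,V_j]$ is $\varepsilon$-regular with density $d_{ij}\ge d$. By $\varepsilon$-regularity, at most $\varepsilon|V_i|$ vertices of $V_i$ have fewer than $(d_{ij}-\varepsilon)|V_j|$ neighbours in $V_j$. Otherwise these vertices would form a set $X$ with $|X|>\varepsilon|V_i|$ and $d(X,V_j)<d_{ij}-\varepsilon$, a contradiction. I discard all such bad vertices of $V_i$, over all at most $D$ pairs at $i$. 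This deletes at most $D\varepsilon|V_i|$ vertices. I then delete further arbitrary vertices so that every cluster loses exactly $2D\varepsilon|V_i|$ vertices, giving subclusters $V_i'$ of common size $(1-2D\varepsilon)m$.

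\textbf{Step 2: thin to exact density.} Let $G'_S$ be the subgraph of the underlying graph $G'$ of $G^*$ induced on $\bigcup V_i'$, keeping only edges between pairs in $E(R)$. Regularity is inherited: for $X\subseteq V_i'$ with $|X|>\sqrt\varepsilon|V_i'|$ we have $|X|>\varepsilon|V_i|$. So $G'_S[V_i',V_j']$ is $\varepsilon/(1-2D\varepsilon)$-regular, hence $\sqrt\varepsilon$-regular, and its density differs from $d_{ij}$ by less than $\varepsilon$.

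To obtain density exactly $d-4D\varepsilon$ as in (ii), I randomly sparsify each pair. Each edge is kept independently with probability $p=(d-4D\varepsilon)/d'_{ij}$, where $d'_{ij}\ge d-\varepsilon$ is the current density. The set of kept edges is then adjusted by $o(m^2)$ edges to hit the target exactly. Chernoff bounds, together with a union bound over the boundedly many pairs, show that with high probability:
\begin{itemize}
\item every vertex keeps about a $p$-fraction of its degree;
\item all large pairs $(X,Y)$ keep density about $p\,d(X,Y)$.
\end{itemize}
The large pairs are controlled via the standard characterisation of regularity by degrees and codegrees, which avoids a union bound over exponentially many sets. This preserves $\sqrt\varepsilon$-regularity with the slack between $\varepsilon$ and $\sqrt\varepsilon$.

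\textbf{Step 3: check super-regularity.} For $ij\in E(S)$, every remaining $x\in V_i'$ had at least $(d_{ij}-\varepsilon)|V_j|-2D\varepsilon|V_j|$ neighbours in $V_j'$. After thinning it has at least roughly $(d-4D\varepsilon-\sqrt\varepsilon)|V_j'|$, which gives (i).

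\textbf{Main obstacle.} The delicate step is the density normalisation to exactly $d-4D\varepsilon$ while keeping both regularity and the minimum-degree condition simultaneously. I would first try deterministic thinning: delete edges at high-degree vertices greedily. If that fails to control regularity, I would fall back on the random-sparsification argument with a codegree-based regularity check.
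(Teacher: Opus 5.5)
The paper does not prove this proposition; it quotes it from \cite{Kelly}. The proof there is exactly your Step~1 together with the first half of your Step~2. For each of the at most $D$ neighbours $j$ of $i$ in $S$, you discard the at most $\varepsilon|V_i|$ vertices of $V_i$ with fewer than $(d_{ij}-\varepsilon)|V_j|$ neighbours in $V_j$, pad the deletions to exactly $2D\varepsilon|V_i|$, and use slicing. That part of your proposal is correct, and it already finishes the proof:
\begin{itemize}
\item For $X\subseteq V_i'$ and $Y\subseteq V_j'$ above the $\sqrt{\varepsilon}$ thresholds, both $d(X,Y)$ and $d(V_i',V_j')$ lie within $\varepsilon$ of $d_{ij}$. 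So they differ by less than $2\varepsilon<\sqrt{\varepsilon}$. Your parameter $\varepsilon/(1-2D\varepsilon)$ should be $2\varepsilon$, but this is harmless.
\item The density satisfies $d(V_i',V_j')>d_{ij}-\varepsilon\ge d-4D\varepsilon$.
\item Every surviving vertex has at least $(d_{ij}-\varepsilon-2D\varepsilon)m\ge(d-4D\varepsilon-\sqrt{\varepsilon})|V_j'|$ neighbours across every pair of $S$.
\end{itemize}
These give (i) and (ii).

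The problem is the random thinning step. You introduce it because you read ``has density $d-4D\varepsilon$'' in (ii) as an exact value. In the source this is a lower bound (density greater than $d-4D\varepsilon$), and nothing stronger is used later in this paper; only (i) feeds into the Blow-up Lemma.

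Taken literally, the exact-density requirement is generally impossible. The quantity $(d-4D\varepsilon)|V_i'||V_j'|$ need not be an integer, so no adjustment by $o(m^2)$ edges can hit it. Your sketched argument that thinning preserves regularity is also not sound. Going from regularity to degree/codegree control and back loses a polynomial power of $\varepsilon$. The standard versions return only $\varepsilon^{c}$-regularity for some $c<1/2$, not $\sqrt{\varepsilon}$-regularity.

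The exponential union bound you wanted to avoid is actually harmless. Every relevant pair $(X,Y)$ spans $\Omega(m^2)$ edges, so Chernoff gives failure probability $e^{-\Omega(m^2)}$ against at most $4^m$ choices of $(X,Y)$.

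Drop the thinning step entirely and do the Step~3 degree computation directly on $G'_S$. Your ``main obstacle'' then disappears.
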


%  The following lemma, which is usually referred to as the \textbf{counting lemma}, states that if a collection of vertex sets are regular/dense enough then we can find in them any small subgraphs we would expect to find, if the graphs were genuinely random.

% \begin{lemma}[\cite{Komlos4} Counting Lemma]\label{counting lemma}
% For every \( d \) and \( h \) there exist \( \varepsilon = \varepsilon(d, h) \) and \( c = c(d, h) \), so that if all pairs among \( V_1, \ldots, V_h \) are of size at least \( c \), of density at least \( d \) and are \( \varepsilon \)-regular, then \( V_1, \ldots, V_h \) contain a copy of any graph \( H \) on \( h \) vertices (with one vertex in each cluster \( V_i \)).
% \end{lemma}

 \subsection{Hajnal-Szemer\'{e}di Theorem and Probabilistic tool} \label{subsec:HSthm}

In a recent work, Balogh et al.\cite{Balogh} obtained an equivalent result to the  Hajnal-Szemer\'{e}di theorem.
They proved that for any graph $G$ on $n$ vertices satisfying $\left(1-\frac{1}{r-1}\right)n \le \delta(G) \le \left(1-\frac{1}{r}\right)n.$
Then $G$ contains a $K_r$-tiling consisting of $(r-1) \delta(G) - (r-2)n$ copies of $K_r$ and a $K_{r-1}$-tiling consisting of $(r-1)n - r \delta(G)$ copies of $K_{r-1}$, such that the two tilings are vertex-disjoint. (see Theorem 3.2 in \cite{Balogh}).
% Accordingly, these lemmas may be regarded as local analogues of the Hajnal--Szemer\'{e}di theorem.

 Kierstead and Kostochka \cite{Kierstead} proved an Ore-type analogue of the Hajnal-Szemer\'{e}di theorem: given a $n$-vertex graph $G$ with $r\mid n$, then $G$ contains a $K_r$-factor if $\sigma_{2}(G)\ge 2(1-1/r)n.$ From this we derive the Ore-type counterpart of the above result.
Let $r$ be a positive integer, and define the following notation:
\[
B_r = \frac{r-1}{2}\,\sigma_2(G) - (r-2)|G|, \qquad
\overline{B}_r = (r-1)|G| - \frac{r}{2}\,\sigma_2(G).
\]

\begin{lemma} \label{OreHSnew}
 Let $n, r \in \mathbb{N}$ and $G$ be a graph on $n$ vertices such that
\[
2\left (1 - \frac{1}{r-1}\right)n \le \sigma_2(G) \le 2\left(1 - \frac{1}{r}\right)n.
\]
 Then $G$ contains a $K_r$-tiling consisting of $B_r$ copies of $K_r$ and a $K_{r-1}$-tiling consisting of $\overline{B}_r$ copies of $K_{r-1}$, such that the two tilings are vertex-disjoint.

\end{lemma}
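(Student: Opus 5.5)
Write $\sigma:=\sigma_2(G)$ and $t:=B_r=\frac{r-1}{2}\sigma-(r-2)n$. The plan is to reduce to the Kierstead--Kostochka theorem by adding $t$ universal vertices. The hypothesis $2(1-\frac1{r-1})n\le\sigma$ gives $t\ge 0$, and the upper bound $\sigma\le 2(1-\frac1r)n$ gives $\overline{B}_r=(r-1)n-\frac r2\sigma\ge 0$. One checks directly that
\[ rB_r+(r-1)\overline{B}_r = \tfrac{r(r-1)}{2}\sigma - r(r-2)n + (r-1)^2 n - \tfrac{r(r-1)}{2}\sigma = n. \]
So the counts are consistent, and it suffices to cover $V(G)$ by $t$ copies of $K_r$ and $\overline{B}_r$ copies of $K_{r-1}$. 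I will assume $\sigma$ has the parity needed for $t$ to be an integer, or round appropriately.

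Form $G^+$ from $G$ by adding a set $S$ of $s:=\overline{B}_r$ new vertices, each adjacent to every vertex of $G$ and to each other. Then $|G^+|=n+s=r(t+s)$, so $r\mid |G^+|$. Non-adjacent pairs of $G^+$ are exactly the non-adjacent pairs of $G$, and every old vertex gains degree $s$, so
\[ \sigma_2(G^+)=\sigma+2s. \]
We need $\sigma+2s\ge 2(1-\frac1r)(n+s)$, which is equivalent to $\sigma\ge 2(1-\frac1r)n-\frac{2s}{r}$. Substituting $s=(r-1)n-\frac r2\sigma$, the right-hand side is $2(1-\frac1r)n-\frac{2(r-1)n}{r}+\sigma=\sigma$, so the inequality holds with equality. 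Kierstead--Kostochka then gives a $K_r$-factor $\mathcal{F}$ of $G^+$ with $t+s$ copies of $K_r$.

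Next, remove $S$. Each clique of $\mathcal{F}$ meets $S$ in some number of vertices. Deleting $S$ leaves a tiling of $V(G)$ by cliques of sizes $r$, $r-1$, $r-2,\dots$, but cliques containing two or more vertices of $S$ produce pieces smaller than $r-1$. This is the main obstacle: we need each vertex of $S$ to lie in a distinct clique.

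First try: a swapping argument. Since $S$ is a clique of universal vertices, suppose a clique $Q\in\mathcal{F}$ contains $u,u'\in S$. A counting argument shows some other clique $Q'$ contains no vertex of $S$ (the cliques number $t+s$ and $|S|=s$; if $t=0$ and no such $Q'$ exists, then every clique meets $S$ exactly once). Pick any $x\in Q'$ and swap $u'$ with $x$. Then $Q-u'+x$ must be a clique, which requires $x$ to be adjacent to $Q\setminus S$; this is not automatic. A cleaner fix is to pass to the complete-$r$-partite-type structure: take $S$ to be an \emph{independent} set of new vertices, each adjacent to all of $V(G)$. Then a $K_r$ contains at most one vertex of $S$. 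The cost is that pairs inside $S$ become non-adjacent with degree sum $2n$, so we need $2n\ge 2(1-\frac1r)(n+s)$, i.e.\ $s\le n/(r-1)$. This holds because $s=(r-1)n-\frac r2\sigma\le (r-1)n-(r-1)\cdot\frac{r-2}{r-1}\,n\cdot\frac{r}{r-1}$, which reduces to $n/(r-1)$ using the lower bound on $\sigma$. Mixed pairs consisting of a vertex of $S$ and a vertex of $G$ are adjacent.

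With independent $S$, every clique of the factor contains at most one vertex of $S$. Since $|S|=s$, exactly $s$ cliques contain one vertex of $S$ and yield copies of $K_{r-1}$ in $G$ after deletion. The remaining $t$ cliques are copies of $K_r$ in $G$, which gives the required tilings. The remaining work is to recheck the $\sigma_2$ computation for pairs inside $S$ carefully, and to handle the degenerate cases $s=0$ and $t=0$, together with integrality.
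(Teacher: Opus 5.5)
Your final construction is the same as the paper's. You add an independent set $S$ of $\overline{B}_r$ new vertices joined to all of $V(G)$, check that $r \mid |G^+|$ and $\sigma_2(G^+)\ge 2(1-\frac1r)|G^+|$, apply Kierstead--Kostochka, and delete $S$, using that each $K_r$ meets $S$ at most once. The paper's $\min\{\sigma_2(G)/2+|S|,n\}$ step is exactly your two checks, for old pairs and for pairs inside $S$ via $s\le n/(r-1)$. You can drop the clique-$S$ detour entirely, and the paper also ignores the integrality issue you flag.
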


\begin{proof}
Let $G'$ be the graph obtained by adding a set $S$ of $(r-1)n-r/2\cdot\sigma_2(G) \geq 0$ new vertices to $G$ and all edges with exactly one vertex in $S$. Then $G'$ is a graph on $n':=n+|S| = r(n-\sigma_2(G)/2)$ vertices with minimum degree
\[
\sigma_2(G')/2 = \min\{\sigma_2(G)/2+|S|,n\} = \min\{(r-1)(n-\sigma_2(G)/2),n\} = (r-1)(n-\sigma_2(G)/2).
\]

In particular, $\sigma_2(G')/2 = (1 - 1/r)n'$ and $n'$ is divisible by $r$. Now we apply the lemma introduced in the previous paragraph to $G'$ to obtain a perfect $K_r$-tiling consisting of $n'/r$ copies of $K_r$. Noitce that no edge lies inside $S$, thus each copy of $K_r$ contains at most one vertex in $S$. In particular, $n'/r - |S|$ copies of $K_r$ do not contain a vertex from $S$ and $|S|$ copies of $K_r$ contain exactly one vertex from $S$.

It follows that the original graph $G$ contains a $K_r$-tiling consisting of $n'/r - |S| = (r-1)/2\cdot\sigma_2(G)-(r-2)n$ copies of $K_r$ and a $K_{r-1}$-tiling consisting of $|S| = (r-1)n-r/2\cdot\sigma_2(G)$ copies of $K_{r-1}$, such that the two tilings are vertex-disjoint.

\end{proof}

If we replace graph in the above lemma with oriented graph, the following corollary follows immediately.

\begin{corollary}\label{hst}
 Let $n, r \in \mathbb{N}$ and $G$ be an oriented  graph on $n$ vertices such that
\[
2\left (1 - \frac{1}{r-1}\right)n \le \sigma_2(G) \le 2\left(1 - \frac{1}{r}\right)n.
\]
 Then $G$ contains a $T_r$-tiling consisting of $B_r$ copies of $T_r$ and a $T_{r-1}$-tiling consisting of $\overline{B}_r$ copies of $T_{r-1}$, such that the two tilings are vertex-disjoint (these $T_r$ and $T_{r-1}$ are not necessarily isomorphic).
\end{corollary}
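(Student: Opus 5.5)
The plan is to apply \cref{OreHSnew} to the underlying graph $U(G)$ and then read off the orientations. The underlying graph $U(G)$ has vertex set $V(G)$, and two vertices are adjacent in $U(G)$ exactly when they are adjacent in $G$, in either direction. The total degree $d_G(x)$ of a vertex in the oriented graph equals its degree in $U(G)$, and non-adjacent pairs in $G$ coincide with non-adjacent pairs in $U(G)$. Hence $\sigma_2(U(G))=\sigma_2(G)$, with the tournament convention $\sigma_2=2(n-1)$ matching the complete-graph case. Consequently the hypothesis $2(1-\frac{1}{r-1})n\le \sigma_2(G)\le 2(1-\frac1r)n$ transfers verbatim to $U(G)$, and the quantities $B_r,\overline{B}_r$ computed for $U(G)$ are the same as those computed for $G$.

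Next I apply \cref{OreHSnew} to $U(G)$. This gives vertex-disjoint tilings in $U(G)$: $B_r$ copies of $K_r$ and $\overline{B}_r$ copies of $K_{r-1}$. For each such clique, say on a vertex set $X$, every pair of vertices in $X$ is adjacent in $U(G)$, so it is joined by exactly one edge of $G$, since $G$ is oriented. Therefore $G[X]$ is an orientation of $K_{|X|}$, which is a tournament $T_{|X|}$. The cliques are vertex-disjoint, so the corresponding tournaments are vertex-disjoint as well. This yields the required $T_r$-tiling of size $B_r$ and $T_{r-1}$-tiling of size $\overline{B}_r$. As noted in the statement, different copies need not be isomorphic tournaments.

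I do not expect any real obstacle; the only point needing care is the identification $\sigma_2(U(G))=\sigma_2(G)$. Here one should note that the degree in $G$ is the total degree, which the paper fixed as its convention. One should also note that in the degenerate case where no non-adjacent pair exists, both conventions give the complete-graph value.
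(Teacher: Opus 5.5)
Your proposal is correct and is exactly the argument the paper intends when it says the corollary ``follows immediately'' from \cref{OreHSnew}. You apply the lemma to the underlying graph $U(G)$, whose $\sigma_2$ equals $\sigma_2(G)$ because degrees are total degrees, and then observe that each $K_r$ or $K_{r-1}$ in the resulting tiling spans a tournament in $G$.
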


% \subsection{Probabilistic tools}
The last important lemma is employed in the proof of the subsequent absorbing lemma; it abstracts and refines the computation process of absorbers into a highly convenient and canonical lemma, thus greatly simplifying the proofs that follow.

\begin{lemma}[\cite{Chang}]\label{disjoint-tuples}
Let \(\sigma\) be a real number with \(0<\sigma<1\) and \(t\) be an integer with \(1\leq t\leq 4\). Then there exists an integer \(n_0\) such that whenever \(n\geq n_0\) the following holds. Let \(G\) be an oriented graph on \(n\) vertices and \(U\) be a vertex subset with \(|U|=n'\geq n/2\). Let \(S\subseteq V(G)\times V(G)\) be a set of pairs of vertices (not necessary to be distinct). Suppose that \(\mathcal{A}(u,v)\) is a family of ordered \(t\)-tuples of \(U\) such that \(|\mathcal{A}(u,v)|\geq \sigma n^t\) for every \((u,v)\in S\). Then there exists a family \(\mathcal{F}\subseteq\bigcup \mathcal{A}(u,v)\) of vertex-disjoint \(t\)-tuples, which satisfies the following properties:
\[
|\mathcal{F}|\leq 2^{-6}\sigma n,\qquad |\mathcal{A}(u,v)\cap\mathcal{F}|\geq 2^{-10}\sigma^{2}n
\]
for all \((u,v)\in S\).
\end{lemma}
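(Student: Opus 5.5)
Since \cref{disjoint-tuples} is the final statement, the plan is the standard random-selection argument. I would fix $p := 2^{-7}\sigma n^{1-t}$ and form a random family $\mathcal{F}'$ by putting each ordered $t$-tuple of distinct vertices of $U$ into $\mathcal{F}'$ independently with probability $p$. The number of such tuples is at most $n^t$, so $\mathbb{E}|\mathcal{F}'|\le pn^t = 2^{-7}\sigma n$. For each $(u,v)\in S$ we have $\mathbb{E}|\mathcal{A}(u,v)\cap\mathcal{F}'| \ge p\sigma n^t = 2^{-7}\sigma^2 n$.

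Next I count pairs of intersecting tuples in $\mathcal{F}'$, meaning distinct tuples that share at least one vertex. The number of ordered pairs of $t$-tuples that share a vertex is at most $t^2 n^{2t-1}$: choose the first tuple, choose one of its $t$ positions and one of the $t$ positions of the second tuple for the shared vertex, then fill the remaining $t-1$ coordinates. The expected number $Y$ of intersecting pairs is therefore at most
\[
p^2 t^2 n^{2t-1} = 2^{-14}\sigma^2 t^2 n \le 2^{-10}\sigma^2 n,
\]
using $t\le 4$. This bound is the reason for the hypothesis $t\le 4$, since it gives $t^2\le 2^4$.

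To control all pairs $(u,v)\in S$ simultaneously I would use Chernoff bounds. Both $|\mathcal{F}'|$ and every $|\mathcal{A}(u,v)\cap \mathcal{F}'|$ are binomial random variables with mean $\Theta(n)$, and $|S|\le n^2$. A union bound then shows that with probability $1-o(1)$ we have $|\mathcal{F}'|\le 2\cdot 2^{-7}\sigma n = 2^{-6}\sigma n$ and $|\mathcal{A}(u,v)\cap\mathcal{F}'|\ge 2^{-8}\sigma^2 n$ for all $(u,v)\in S$. By Markov's inequality, $\Pr[Y\ge 2\cdot 2^{-10}\sigma^2 n]\le 1/2$. For $n\ge n_0$ these events therefore hold simultaneously with positive probability, and I fix such an outcome.

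Finally I would delete from $\mathcal{F}'$ every tuple that lies in some intersecting pair. This removes at most $2Y\le 2^{-8}\sigma^2 n$ tuples, where the factor $2$ appears because each pair contributes two tuples. The resulting family $\mathcal{F}$ is vertex-disjoint and satisfies $|\mathcal{F}|\le 2^{-6}\sigma n$.

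The main obstacle is this constant bookkeeping in the last step. With the crude estimates above, the lower bound $2^{-8}\sigma^2 n$ minus the deletion cost $2^{-8}\sigma^2 n$ yields nothing. To fix this I would tighten the constants. First, use the Chernoff lower bound at $(1-o(1))\cdot 2^{-7}\sigma^2 n$ rather than at half the mean. Second, bound $Y$ by Markov at level $3\mathbb{E}Y$ together with $\mathbb{E}Y\le 2^{-14}\cdot 16\,\sigma^2 n/2$, where the factor $1/2$ comes from counting unordered pairs. Then
\[
2Y\le 3\cdot 2^{-10}\sigma^2 n < 2^{-7}\sigma^2 n - 2^{-10}\sigma^2 n,
\]
and at least $2^{-10}\sigma^2 n$ tuples of every $\mathcal{A}(u,v)$ survive. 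If the constants still did not close, I would decrease $p$ slightly, say to $2^{-8}\sigma n^{1-t}$. This improves the ratio between the overlap cost, which is quadratic in $p$, and the gain, which is linear in $p$; the required lower bound $2^{-10}\sigma^2 n$ sits comfortably below the new mean $2^{-8}\sigma^2 n$, so the conclusion is still achieved.
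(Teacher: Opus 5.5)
Your proposal is correct. It is the standard random-selection-and-deletion argument behind the result of \cite{Chang}, which this paper cites without proof: include each $t$-tuple with probability $p=2^{-7}\sigma n^{1-t}$, apply Chernoff with a union bound over $S$, and bound the number of intersecting pairs by Markov. Two small points: you should sample only from $\bigcup_{(u,v)\in S}\mathcal{A}(u,v)$ (or discard the other tuples at the end) so that $\mathcal{F}\subseteq\bigcup\mathcal{A}(u,v)$. Also, your first-pass constants already close without any tightening if you delete just one tuple from each intersecting pair, since $2^{-8}\sigma^2 n-2^{-9}\sigma^2 n\ge 2^{-10}\sigma^2 n$.
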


 \section{Proof of \cref{th2}}

  Let $0<\eta\le 2\gamma\ll 1$, if $n\le\sigma_2(G)\le (1+\eta) n$. By Ore's theorem, there exist a Hamilton cycle $C$ such that $\sigma_{\max}(C)\geq n/2$ holds trivially. It follows readily from $\eta\le 2\gamma$ that
  \[
  \sigma_{\max}(C)\ge \frac{n}{2}\geq \frac{(1+\eta)n}{2}-\gamma n\ge\frac{\sigma_{2}(G)}{2}-\gamma n.
  \]
  Thus, we may assume \( \sigma_2(G) \geq (1+\eta)n \) for the rest of the proof.

\subsection{Preparation of \cref{th2}}\label{sec:ore}

As outlined in \cref{sec:prep}, the proof of \cref{th2} depends crucially on three key auxiliary lemmas: an almost covering lemma, an absorbing lemma and a reservoir lemma. In this section, we establish these three results, proving them explicitly as Lemmas \ref{lem:covering}, \ref{absorbing}, and \ref{reservoir}.

\noindent\textbf{4.1.1 Almost Covering Lemma.} In this part we prove the following almost covering lemma. In the proof, for a family of disjoint paths \( \mathcal{P} = \{P_1, P_2, \ldots, P_t\} \), we define $\sigma_{\max}(\mathcal{P}) = \sum_{i=1}^t \sigma_{\max}(P_i).$ The same definition applies to a family of disjoint cycles.

\begin{lemma}[Almost Covering Lemma]\label{lem:covering}
Suppose \(1/n \ll 1/M \ll \varepsilon \ll d \ll\eta \ll 1\).
Let \(G\) be an $n$-vertex oriented graph satisfying $\sigma_{2}(G)\ge(1+\eta )n$.
Then there exists a family $\mathcal{P}$ of at most $M$ vertex-disjoint paths in $G$ that covers all but at most $5\varepsilon n$ vertices of $V(G)$ and satisfies
\[
\sigma_{\max}(\mathcal{P}) \ge \frac{\sigma_2(G)}{2} - 5dn.
\]

\end{lemma}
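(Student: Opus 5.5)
We apply the degree form of the Diregularity Lemma (\cref{lem:diregularity}) to $G$ with parameters $\varepsilon,d$ and lower bound $M'$ on the number of clusters. This gives clusters $V_1,\dots,V_k$ of size $m$, an exceptional set $V_0$ with $|V_0|\le\varepsilon n$, and the pure oriented graph $G^*$. By \cref{reduced-oriented} and \cref{oreprop} we then pass to the reduced oriented graph $R$ on $k$ vertices with
\[
\sigma_2(R)\ge\Bigl(\frac{\sigma_2(G)}{n}-(5\varepsilon+4d)\Bigr)k .
\]
Since $\sigma_2(G)\ge(1+\eta)n$ and $\varepsilon\ll d\ll\eta$, we get $\sigma_2(R)\ge k$. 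So there is an integer $r\ge 3$ such that $2(1-\frac{1}{r-1})k\le\sigma_2(R)\le 2(1-\frac1r)k$, and we apply \cref{hst} to $R$. This yields vertex-disjoint $B_r(R)$ tournaments $T_r$ and $\overline B_r(R)$ tournaments $T_{r-1}$ that together cover $V(R)$, since $rB_r+(r-1)\overline B_r=k$. Each tournament contains a directed Hamilton path by Rédei's theorem. Let $S\subseteq R$ be the union of these directed paths; then $\Delta(S)\le 2$.

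Next we apply \cref{prop:super-regular} with $D=2$. Deleting at most $4\varepsilon m$ vertices from each cluster gives subclusters $V_i'$ such that every pair $V_i'V_j'$ with $ij\in E(S)$ is $(\sqrt\varepsilon,d-8\varepsilon)$-super-regular in the prescribed direction. Trimming $O(\varepsilon m)$ further vertices, we may assume all subclusters have equal size $m'\ge(1-5\varepsilon)m$; this trimming does not destroy super-regularity. Now take a directed path $v_1v_2\cdots v_s$ of $S$, with $s\in\{r-1,r\}$. The blow-up is a blown-up directed path, and we want to cover $V_{v_1}'\cup\dots\cup V_{v_s}'$ by $m'$ vertex-disjoint directed paths $x_1x_2\cdots x_s$ with $x_j\in V_{v_j}'$. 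The disjoint union of these paths is a subgraph of the complete blow-up with maximum degree $2$. So the Blow-up Lemma (\cref{blowup}), applied to the underlying graph while keeping each pair's edges in the forward direction only, embeds it. Each such path is fully directed, so it contributes $s-1$ edges to $\sigma_{\max}$.

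To keep the number of paths bounded, we would rather not stop at $m'$ short paths per tile. Instead, inside each tile $T$ we embed a single long path that winds through the clusters: $x^1_1\cdots x^1_s$, then an edge $x^1_sx^2_1$, and so on. The connecting edges $x^i_s x^{i+1}_1$ lie in the pair $(V_{v_s}',V_{v_1}')$, which is an edge of the tournament $T$ in one of the two directions. We therefore add that edge to $S$, so that $S$ becomes a cycle-with-one-arc inside each tile and $\Delta\le 2$ still holds. The Blow-up Lemma then gives a Hamilton path $P_T$ of the blown-up tile. In $P_T$, within each $m'$-block the $s-1$ edges go forward, and the single connecting edge goes backward if $v_s\to v_1$ fails, i.e.\ in the transitive case. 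Hence $\sigma_{\max}(P_T)\ge m'(s-1)$. This gives $|\mathcal P|\le k\le M$ paths, which cover all but $|V_0|+5\varepsilon n\le 6\varepsilon n$ vertices; we tune the constants (e.g.\ apply with $\varepsilon/2$) to get $5\varepsilon n$.

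The discrepancy count is then
\[
\sigma_{\max}(\mathcal P)\ge m'\bigl((r-1)B_r+(r-2)\overline B_r\bigr)=m'\bigl(k-B_r-\overline B_r\bigr).
\]
Using $B_r+\overline B_r=k-\sigma_2(R)/2$, this equals $m'\sigma_2(R)/2\ge(1-5\varepsilon)m\bigl(\sigma_2(G)/n-5\varepsilon-4d\bigr)k/2$. Since $mk\ge(1-\varepsilon)n$, this is at least $\sigma_2(G)/2-5dn$. This identity is exactly why the tiling of \cref{hst} is the right tool: it matches the extremal example. The main obstacle I expect is the Blow-up Lemma application, since it is stated for undirected graphs. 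We must make sure that using only the forward-direction edges of $G^*$ in each pair (super-regular by \cref{prop:super-regular}, as $S$ is oriented) produces genuinely directed edges, and that the equal-size trimming keeps super-regularity. The arithmetic and the rounding of $B_r,\overline B_r$ to integers are routine.
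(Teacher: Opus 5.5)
Your proposal is correct and takes essentially the paper's route: tile the reduced oriented graph $R$ via \cref{hst}, and close each tournament's directed Hamilton path into a spanning cycle with the remaining arc. You then blow this cycle up with \cref{prop:super-regular} and \cref{blowup} into a path winding $m'$ times through the clusters, and count $m'\sigma_2(R)/2$ dominant edges. The only cosmetic differences are that you embed the winding Hamilton path directly instead of a Hamilton cycle minus one edge, and your extra equal-size trimming is unnecessary, since \cref{prop:super-regular} already deletes exactly $4\varepsilon m$ vertices from every cluster.
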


\begin{proof}
Choose an additional constant \( M' \in \mathbb{N} \) such that
\[
0 < 1/n \ll 1/M \ll 1/M' \ll \varepsilon \ll d \ll \eta \ll 1.
\]

Apply \cref{lem:diregularity}  to \(G\) with parameters \((\varepsilon, d, M')\) to obtain a partition \(V_0, V_1, \ldots, V_k\) of \(V(G)\) with \(k \geq M'\). Let \(R\) be the reduced oriented graph with parameters \((\varepsilon, d)\) given by Lemma \ref{oreprop}. Let $G^*$ be the pure oriented graph. By \cref{oreprop} and the choice of $\varepsilon\ll d\ll \eta$ we have
 \begin{equation}\label{sigma2R}
     \sigma_2(R)\ge(1+\eta/2)k.
 \end{equation}
Then there exist a positive integer $3\le r\le k$ such that
     \[
        2\left(1 - \frac{1}{r-1} \right)k\le \sigma_2(R) \le 2\left(1 - \frac{1}{r}\right)k.
    \]
   By \cref{hst}, \( R \) admits a tiling \( \mathcal{T} \) consisting of \( B_r \) copies of \( T_r \) and \( \overline{B}_r \) copies of \( T_{r-1} \). A direct calculation  shows that these tournaments cover all \( k \) vertices of \( R \):
   \[rB_r+(r-1)\overline{B}_r=r\left[\frac{r-1}{2}\,\sigma_2(R) - (r-2)k\right]+(r-1)
\left[ (r-1)k - \frac{r}{2}\,\sigma_2(R)\right]=k,\] and the total number of tiles satisfies
\begin{equation}\label{T}
     |\mathcal{T}|=B_r + \overline{B}_r=k-\sigma_2(R)/2\le k-(1-\frac{1}{r-1})k\le M\le dn.
\end{equation}

    %   \le k/(r-1)$.

     Consider one tile $T_r\in \mathcal{T}$ with vertices $V_1V_2\dots V_r$. Recall that every tournament contains a directed Hamilton path, without loss of generality, let $P=V_1V_2\dots V_r$ be a directed Hamilton path in $T_r$.
     Together with the edge $V_rV_1$, which may be oriented opposite to the dominate direction of $P$, these vertices form a Hamilton cycle  $C_1'=\{V_1V_2\dots V_rV_1\}$  satisfying $\sigma_{\max}(C_1')\ge r-1.$
       Applying the same argument to the remaining tiles $T_r$ and $T_{r-1}$ in $\mathcal{T}$ yields a cycle factor consisting of $B_r$ cycles of length $r$ and $\overline{B}_r$ cycles of length $r-1$, such that
       \begin{equation}\label{factorinR1}
       \sigma_{\text{max}}(\mathcal{C'})\geq B_r \cdot (r - 1) + \overline{B}_r \cdot (r - 2)=\sigma_2(R)/2.
    \end{equation}
          Applying \cref{prop:super-regular} with $S=\mathcal{C'}$ and $\Delta = 2$, we obtain an oriented subgraph $G_{\mathcal{C'}}$ of $G^*$.
    Its vertex set is the union of subsets $V_i'$, each obtained by deleting exactly $4\varepsilon|V_i|$ vertices from the corresponding cluster $V_i$. Moreover, for every edge $ij \in E(C_1)$, the bipartite oriented subgraph $G_{\mathcal{C'}}[V_i', V_j']$ is $(\sqrt{\varepsilon}, d - 8\varepsilon)$-super-regular.
Observe that all \( V_i' \) have the same order $(1 - 4\varepsilon)m$, where $m=|V_i|$, and that every balanced blow-up of a cycle is Hamiltonian. Therefore, an application of \cref{blowup} yields a cycle factor $\mathcal{C}=\{C_1, C_2, \dots, C_{|\mathcal{T}|}\}$ of $G_{\mathcal{C'}}$, where  each $C_i$ is a  \((1 - 4\varepsilon)m\)-blow-up of \( C_i' \).  Thus,
\begin{equation}\label{sigmaxc}
    \sigma_{\max}(\mathcal{C}) = (1 - 4\varepsilon)m \cdot \sigma_{\max}(\mathcal{C'}).
\end{equation}
 Deleting one edge from each cycle \( C_i \) yields a family \( \mathcal{P} = \{P_1, P_2, \dots, P_{|\mathcal{T}|}\} \) of vertex-disjoint paths and these paths cover all vertices of $G$ except at most $|V_0| + \sum_{i=1}^k 4\varepsilon|V_i| \leq 5\varepsilon n.$

 We now estimate the number of dominant direction edges in   $\mathcal{P}.$
By \cref{oreprop}, we have \( \sigma_2(R) \geq ( \sigma_2(G)/n - 5d ) k \). Together with the fact that \( \sigma_{\max}(P_i) \geq \sigma_{\max}(C_i) - 1 \) for each cycle, along with \eqref{T}, \eqref{sigmaxc} and \eqref{factorinR1}, we obtain
  \begin{equation}\label{sigQ}
        \sigma_{\max}(\mathcal{P})=\sum_{i=1}^{|\mathcal{T}|} \sigma_{\max}(P_i)\geq (1-4\varepsilon)\cdot  m\cdot(\frac{\sigma_2(G)}{n} - 5d)\frac{k}{2}-|\mathcal{T}|.
    \end{equation}
Since \( mk = (1 - \varepsilon)n \) inequality \eqref{sigQ} simplifies to $\sigma_{\max}(\mathcal{P}) \geq \sigma_2(G)/2 - 5dn$.

\end{proof}

\noindent\textbf{4.1.2 Absorbing and Connecting Lemma.} The present section is dedicated to the proof of absorbing lemma, in which we state that the absorber we shall use will simply be a path.

It is straightforward to observe that if \(uv \notin E(G)\), then \(u\) and \(v\) have at least \(\eta n\) common neighbors, which follows directly from the condition \(\sigma_2(G) \ge (1+\eta)n\). This leads us to the following lemma.
\begin{lemma}[Connecting Lemma]\label{connecting}
Suppose \(0 < 1/n \ll\eta \ll 1\). Let \(G\) be an \(n\)-vertex oriented graph satisfying $\sigma_{2}(G)\ge(1+\eta )n$.  For every pair of distinct vertices \((u,v)\), if \(uv \notin E(G)\), then \(G\) contains at least \(\eta n\)  \((u,v)\)-paths of length \(2\).
\end{lemma}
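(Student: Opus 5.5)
The plan is a direct common-neighbourhood count, taking the hypothesis the way the sentence preceding the lemma uses it. There, ``\(uv \notin E(G)\)'' is read as \(u\) and \(v\) being non-adjacent, so that the pair \(\{u,v\}\) is one of the pairs over which \(\sigma_2(G)\) is minimised. A \((u,v)\)-path of length \(2\) is an oriented path \(uwv\) in the sense of \cref{not}: each of the edges between \(u,w\) and between \(w,v\) may point either way. The claim therefore reduces to showing that \(u\) and \(v\) have at least \(\eta n\) common neighbours in \(U(G)\).

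\textbf{Step 1: the bound.} Since \(u\) and \(v\) are non-adjacent and distinct, and \(G\) has no loops, neither of \(u,v\) lies in \(N(u)\cup N(v)\). Hence \(|N(u)\cup N(v)|\le n-2\). Because \(G\) is oriented, \(d(x)=|N(x)|\) for every vertex \(x\). Inclusion--exclusion together with \(\sigma_2(G)\ge (1+\eta)n\) then gives
\[
|N(u)\cap N(v)| = d(u)+d(v)-|N(u)\cup N(v)| \ge (1+\eta)n-(n-2) > \eta n .
\]

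\textbf{Step 2: turning neighbours into paths.} For each \(w\in N(u)\cap N(v)\) we have \(w\ne u,v\). Whichever way the edges between \(u,w\) and between \(w,v\) are oriented, \(uwv\) is an oriented \((u,v)\)-path of length \(2\). Distinct choices of \(w\) give distinct paths, and indeed internally vertex-disjoint ones, which is the form needed later when these paths are used as connectors. This proves the lemma.

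\textbf{Main obstacle.} The only real issue is the interpretation of the hypothesis. If ``\(uv\notin E(G)\)'' were meant for an ordered pair with \(vu\in E(G)\) allowed, then \(u\) and \(v\) would be adjacent. In that case \(\sigma_2(G)\) gives no information about \(d(u)+d(v)\), and the counting above breaks down.

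In that reading I would argue differently. By the earlier reduction, \(\sigma_2(G)\ge (1+\eta)n\ge n\), and every vertex \(x\) non-adjacent to some vertex \(y\) satisfies \(d(x)\ge \sigma_2(G)-d(y)\). I would try to bound \(d(u)+d(v)\) from below through a third vertex non-adjacent to \(u\) or \(v\), or else fall back on the fact that a vertex adjacent to almost everything trivially has many common neighbours with \(v\). This would require a case analysis on the degrees of \(u\) and \(v\).

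Similarly, if directed \(u\to w\to v\) paths were required, one would need \(|N^+(u)\cap N^-(v)|\ge\eta n\). The total-degree condition \(\sigma_2\) does not control this quantity, so a semi-degree form of the hypothesis would be needed. Since the paper explicitly derives the lemma from the non-adjacent common-neighbour observation, I would commit to the non-adjacent, oriented-path reading. Under that reading the two steps above constitute the whole proof.
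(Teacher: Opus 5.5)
Your argument is correct and is exactly the paper's: non-adjacency of $u,v$ gives $d(u)+d(v)\ge(1+\eta)n$, so $u$ and $v$ have at least $\eta n$ common neighbours $w$, and each gives an oriented path $uwv$ (your bound $|N(u)\cup N(v)|\le n-2$ is only a slight sharpening of the paper's $n$). The fallback you sketch for the ordered reading, where $vu\in E(G)$ is allowed, is not needed and could not work: for adjacent $u,v$ the statement can fail outright. For example, let $u$ have an independent neighbourhood $A$ with $|A|=2\eta n$, join $A$ completely to the remaining vertices, let those remaining vertices form a tournament, and take $v\in A$ with the edge oriented $vu$; then $\sigma_2(G)\ge(1+\eta)n$ holds but $N(u)\cap N(v)=\emptyset$.
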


% \begin{proof}
%  For any distinct vertices \(u\) and \(v\), if \(uv \notin E(G)\), then \(d(u) + d(v) \geq (1 + \eta)n\). Consequently, \(u\) and \(v\) have at least \(\eta n\) common neighbors, each of which forms a \((u, v)\)-path of length 2.
% \end{proof}

\begin{lemma}[Absorbing Lemma]\label{absorbing}
Suppose \(0 < 1/n \ll \mu\ll\eta \ll 1\).
Let \(G\) be an \(n\)-vertex oriented graph satisfying $\sigma_{2}(G)\ge(1+\eta )n$.   Then there exists an  path \(P_{abs}\) with at most \( 8\sqrt{\mu} n\) vertices such that, for every \(W \subseteq V(G) \setminus V(P_{abs})\) with \(|W| \le  \mu n\), \(G[V(P_{abs}) \cup W]\) contains a spanning path having the same end vertices as \(P_{abs}\).
\end{lemma}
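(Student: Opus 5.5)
The plan is to build the absorbing path by the two-step absorbing strategy of \cite{Chang}, with \cref{disjoint-tuples} providing the absorbers and \cref{connecting} gluing them. Since every path here is oriented (edges may point either way), a structure that absorbs in the underlying graph also absorbs in $G$. Hence a single vertex $v$ can be inserted between the two endpoints of an edge $ab$ of the path whenever $a,b\in N(v)$.

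\textbf{Step 1: split the vertices by degree.} Let $H=\{v: d(v)\ge (1/2+\eta/4)n\}$ and $L=V(G)\setminus H$.
\begin{itemize}
\item If $u,v\in L$, then $d(u)+d(v)<(1+\eta)n$, so $u$ and $v$ are adjacent. Thus $L$ is a clique in $U(G)$, and since $d(v)<(1/2+\eta/4)n$ for $v\in L$, we get $|L|\le (1/2+\eta/4)n$.
\item Every non-neighbour $w$ of a vertex $v\in L$ lies in $H$, and $d(w)\ge (1+\eta)n-d(v)$.
\end{itemize}

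\textbf{Step 2: absorbers for high-degree vertices.} For $v\in H$, let $\mathcal{A}_1(v)$ be the set of ordered pairs $(a,b)$ with $ab\in E(U(G))$ and $a,b\in N(v)$.
\begin{itemize}
\item I first need $|\mathcal{A}_1(v)|\ge \sigma n^2$ for some $\sigma=\sigma(\eta)>0$.
\item Take $a\in N(v)$ and count its neighbours inside $N(v)$. If $a$ is non-adjacent to some vertex of $N(v)$, the $\sigma_2$ condition on that pair forces $a$ or its partner to have large degree. Together with $|N(v)|\ge(1/2+\eta/4)n$, this gives $\Omega(\eta n)$ neighbours inside $N(v)$ for a positive fraction of the vertices $a\in N(v)$.
\item This counting is the first place I expect real work. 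I may need the slightly weaker threshold $|N(v)|\ge n/2+\eta n/4$ and a case split on whether $N(v)$ contains many low-degree vertices. If it does, those low-degree vertices form a clique, which gives $\Omega(n^2)$ edges directly.
\end{itemize}
Apply \cref{disjoint-tuples} with $t=2$ to obtain a family $\mathcal{F}_1$ of disjoint pairs with $|\mathcal{F}_1|\le 2^{-6}\sigma n$ and at least $2^{-10}\sigma^2 n$ absorbers for each $v\in H$.

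\textbf{Step 3: two-step absorbers for low-degree vertices.} For $v\in L$, let $\mathcal{A}_2(v)$ be the set of triples $(x,w,y)$ with:
\begin{itemize}
\item $w\in H$ and $w$ not adjacent to $v$;
\item $x,y\in N(v)\cap N(w)$.
\end{itemize}
Then $x,w,y$ is a path, and replacing $w$ by $v$ gives the path $x,v,y$. Since $vw\notin E$, \cref{connecting} gives at least $\eta n$ common neighbours of $v$ and $w$.

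If $v$ has at least $\eta n$ non-neighbours, this yields $|\mathcal{A}_2(v)|\ge \eta^3 n^3/2$. Otherwise $d(v)\ge(1-\eta)n$, so $v$ is in fact in $H$, and we use the pair absorbers of Step 2.

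Now apply \cref{disjoint-tuples} with $t=3$ inside $U=V(G)\setminus V(\mathcal{F}_1)$. This keeps $|U|\ge n/2$ and loses at most $2\sigma n$ triples per vertex, which is negligible. The result is a family $\mathcal{F}_2$ disjoint from $\mathcal{F}_1$.

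\textbf{Step 4: connect into a path.} Concatenate all tuples of $\mathcal{F}_1\cup\mathcal{F}_2$ into one path $P_{abs}$, one joint at a time. If consecutive endpoints are adjacent, use that edge. Otherwise \cref{connecting} gives $\eta n$ common neighbours, and at most $O(\sigma n)\ll\eta n$ of them have been used so far, so a fresh one is available. Choosing $\sigma$ with $\mu\ll\sigma^2$ and $\sigma\le\sqrt{\mu}$ (this fixes how small $\mu$ must be, and is consistent with the hierarchy $\mu\ll\eta$) gives $|V(P_{abs})|\le 8\sqrt{\mu}n$.

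\textbf{Absorption.} Given $W$ with $|W|\le\mu n$, process its vertices one at a time.
\begin{itemize}
\item A vertex of $W\cap H$ uses an unused pair of $\mathcal{F}_1$ from its own list.
\item A vertex $v\in W\cap L$ uses an unused triple $(x,w,y)$ of $\mathcal{F}_2$. This releases $w\in H$, which is then re-absorbed by an unused pair from its own $\mathcal{F}_1$ list.
\end{itemize}
At most $2\mu n<2^{-10}\sigma^2n$ absorbers are ever consumed, so each list always has a spare one. The endpoints of $P_{abs}$ are never changed.

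The main obstacle is the lower bound $|\mathcal{A}_1(v)|=\Omega(n^2)$ for high-degree $v$, i.e.\ showing that $N(v)$ spans quadratically many edges using only $\sigma_2$. If it fails for vertices just above the threshold, I would raise the threshold defining $H$ and handle the intermediate vertices by the triple absorbers as well.
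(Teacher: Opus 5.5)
Your proof is correct, but your second-step absorber differs from the paper's. Your Step 2 and the observation that low-degree vertices form a clique match the paper. The difference is in how low-degree vertices are absorbed.

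\textbf{The paper's route.} Following \cite{Chang}, the paper calls $v$ good if $d(v)\ge(1+\eta)n/2$. For a bad $v$ it uses $4$-tuples $(x,c,d,y)$ with $x,y\in N(v)$ good and $c,d\in N(x)\cap N(y)$ good. Each tuple is built into $P_{abs}$ as $x,c,\dots,d,y$. Absorbing $v$ as $x,v,y$ ejects the whole segment from $c$ to $d$. That segment must then be reinserted by a strong absorber of the \emph{pair} $\{c,d\}$, i.e.\ an edge $ab$ with $a\sim c$ and $b\sim d$. So the paper needs $\Omega(n^2)$ such absorbers for every relevant pair. It applies \cref{disjoint-tuples} with $t=4$ for the weak absorbers, and with pairs $\{u,v\}$ as targets for the strong ones.

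\textbf{Your route.} You observe that a low-degree $v$ has at least $n/3$ non-neighbours. Each of them lies in $H$ and shares at least $\eta n$ neighbours with $v$. So $v$ can simply replace the middle vertex $w$ of a segment $x,w,y$, and the ejected object is a single high-degree vertex, reinserted by an ordinary edge absorber. This needs only $t\in\{2,3\}$, only single vertices as targets, and no pair-absorbability estimate, so it is shorter and easier to check.

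The paper's segment-ejecting gadget is more flexible and is inherited from \cite{Chang}. Since both arguments work entirely in $U(G)$, that extra flexibility is not needed here.

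\textbf{Two small repairs.}

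First, the step you flag as the main obstacle closes in one line. For $a\in N(v)\cap H$ one has $|N(a)\cap N(v)|\ge d(a)+d(v)-n\ge \eta n/2$. So either $|N(v)\cap H|\ge \eta n/8$ and $|\mathcal{A}_1(v)|\ge \eta^2n^2/16$, or $|N(v)\cap L|\ge n/2$ and the clique $L$ gives $\Omega(n^2)$ pairs.

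Second, your parameter choice ``$\mu\ll\sigma^2$ and $\sigma\le\sqrt{\mu}$'' is self-contradictory. With $\sigma\le\sqrt{\mu}$ each list would keep only $2^{-10}\mu n$ absorbers, too few to absorb $\mu n$ vertices. Take $\sigma=2^6\sqrt{\mu}$ in both applications of \cref{disjoint-tuples}; this is still $\ll\eta^2$ because $\mu\ll\eta$. Then each list contains $2^{-10}\sigma^2n=4\mu n>2\mu n$ absorbers. Counting one connector per joint, $|V(P_{abs})|\le 3|\mathcal{F}_1|+4|\mathcal{F}_2|\le 7\cdot 2^{-6}\sigma n=7\sqrt{\mu}\,n$, within the required $8\sqrt{\mu}\,n$.
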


In order to establish \cref{absorbing}, we use a two-step absorption method that originates in \cite{Chang}. To set the stage, we present two definitions similar to those in \cite{Chang}.

\begin{definition}
Let $0<\alpha_1\ll 1$ and \(G\) be an \(n\)-vertex oriented graph. A \emph{strong absorber} of a pair of vertices \(\{u,v\}\) (not necessary to be distinct)  is a set of two vertices \(a,b \in V(G)\) such that \(ab, au, bv \in E(U(G))\). We say that \(\{u,v\}\) is \emph{\(\alpha_1\)-strongly absorbable} if it has at least \(\alpha_1 n^2\) strong absorbers.
\end{definition}

\begin{definition}
Let $0<\alpha_2\ll\alpha_1\ll 1$ and \(G\) be an \(n\)-vertex oriented graph. An \emph{\(\alpha_1\)-weak absorber} of a pair of vertices \(\{u,v\}\) (not necessary to be distinct) is a set of four vertices \(a,a',b',b \in V(G)\) such that \(aa', au, vb, b'b \in E(U(G))\) and \(a',b'\) is \emph{\(\alpha_1\)-strongly absorbable}. We say that \(u\) is \((\alpha_2,\alpha_1)\)-weakly absorbable if it has at least \(\alpha_2 n^4\) of \(\alpha_1\)-weak absorbers.
\end{definition}

The next lemma shows that if $\sigma_2(G)\ge (1+\eta)n$, then every vertex of $G$ is either strongly or weakly absorbable.

\begin{lemma}\label{absorbability}
Let \(1/n \ll \alpha_2 \ll \alpha_1 \ll \eta\ll1\). Let \(G\) be an \(n\)-vertex oriented graph with $\sigma_2(G)\ge (1+\eta)n$. Then every vertex \(v \in V(G)\) is either \(\alpha_1\)-strongly absorbable or \((\alpha_2,\alpha_1)\)-weakly absorbable.
\end{lemma}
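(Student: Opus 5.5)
The plan is to split according to the degree of $v$, after first proving one auxiliary claim: \emph{every pair $\{x,y\}$ with $d(x),d(y)\ge (1+\eta)n/2$ is $\alpha_1$-strongly absorbable.} This claim settles every high-degree vertex immediately, via the pair $\{v,v\}$. The low-degree vertices are then handled with weak absorbers, and that is where I expect the real difficulty.

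Throughout, call a vertex \emph{high} if $d(x)\ge (1+\eta)n/2$ and \emph{low} otherwise, and let $L$ be the set of low vertices. Two low vertices cannot be non-adjacent, since otherwise their degree sum would be below $(1+\eta)n\le\sigma_2(G)$. Hence $L$ is a clique in $U(G)$.

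\emph{Proof plan for the claim.} Let $x,y$ be high, so $|N(x)|,|N(y)|\ge(1+\eta)n/2$. A high vertex $c\in N(x)$ has at least $d(c)+|N(y)|-n\ge \eta n$ neighbours in $N(y)$, apart from $O(1)$ exceptions. Symmetrically, a high $d\in N(y)$ has at least $\eta n$ neighbours in $N(x)$. I would then split into two cases.
\begin{itemize}
\item If $N(x)$ or $N(y)$ contains at least $\eta n/4$ high vertices, the estimate above gives $\Omega(\eta^2 n^2)$ ordered pairs $(c,d)$ with $c\in N(x)$, $d\in N(y)$ and $cd\in E(U(G))$.
\item Otherwise both neighbourhoods contain at least $(1+\eta)n/2-\eta n/4\ge n/2$ low vertices. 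Since $L$ is a clique, every pair (low $c\in N(x)$, low $d\in N(y)$, $c\neq d$) is adjacent, which gives $\Omega(n^2)$ strong absorbers.
\end{itemize}
With $\alpha_1\ll\eta$, either case makes $\{x,y\}$ $\alpha_1$-strongly absorbable. In particular every high $v$ is $\alpha_1$-strongly absorbable via the pair $\{v,v\}$, with $a,b$ ranging over adjacent pairs in $N(v)$.

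\emph{Low vertices.} Now let $v\in L$. First, $d(v)\ge \eta n$: $v$ has a non-neighbour $w$ (otherwise $d(v)=n-1$), and $d(w)\le n-1$ forces $d(v)\ge(1+\eta)n-(n-1)$. Moreover every non-neighbour of $v$ is high, so there are at least $n-1-d(v)\ge(1-\eta)n/2-1$ high vertices. I would aim to show that at least $\Omega(\eta n)$ vertices $a\in N(v)$ each have at least $\Omega(\eta n)$ high neighbours $a'$. Given this, choose $a,a'$ and $b,b'$ independently in this way. By the claim, $\{a',b'\}$ is $\alpha_1$-strongly absorbable, and discarding the $O(n^3)$ non-distinct quadruples still leaves at least $\Omega(\eta^4 n^4)\ge\alpha_2 n^4$ $\alpha_1$-weak absorbers $(a,a',b',b)$ (with $u=v$), since $\alpha_2\ll\alpha_1\ll\eta$.

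\emph{Main obstacle.} The hard step is guaranteeing many high second neighbours. A vertex $a\in N(v)$ might see mostly low vertices, since $L$ can be large (up to roughly $(1+\eta)n/2$). I would try to rule out the bad case as follows. If most of $N(v)$ consists of vertices with fewer than $\eta n/4$ high neighbours, these vertices have at least $\eta n/2$ low neighbours each. Pairing such a vertex with a non-adjacent vertex and using $\sigma_2(G)\ge(1+\eta)n$, I would try to derive a contradiction. If that fails, I would use the fallback: allow $a'$ to be low as well, and verify directly that pairs $\{a',b'\}\subseteq L$ are strongly absorbable. For this I would use $c,d$ taken from $L$ itself, where all pairs are adjacent, whenever $|L|=\Omega(n)$. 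If instead $|L|$ is small, almost every neighbour of $a$ is high and the first route works. Either way the counting follows the two-case pattern of the claim.
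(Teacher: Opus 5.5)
Your auxiliary claim (every pair of high vertices is $\alpha_1$-strongly absorbable) is correct, and so is your treatment of high $v$. Your two-case argument for the claim is also more careful than the paper's one-line justification of the same fact.

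The gap is the step you flag yourself. For low $v$ you never prove that linearly many $a\in N(v)$ have linearly many high neighbours, and neither suggested route closes this as stated.
\begin{itemize}
\item If a high $a$ has fewer than $\eta n/4$ high neighbours, it has more than $n/2$ low neighbours, so $|L|>n/2$. Ruling this out is a global fact; pairing $a$ with a single non-neighbour gives nothing new.
\item For a low $a$ there is no contradiction to be had at all, since a low vertex can have no high neighbours. That case must be handled differently.
\item Your fallback lets $a'$ be low while $b'$ may be high. It therefore needs mixed pairs $\{a',b'\}$ to be strongly absorbable, which you have not shown.
\item The split ``$|L|=\Omega(n)$'' versus ``$|L|$ small, so almost all neighbours of $a$ are high'' leaves intermediate values of $|L|$ without consistent constants.
\end{itemize}

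The missing ingredient is a double count of $e(L,H)$. Here $H=V(G)\setminus L$, $d_H(x)=|N(x)\cap H|$ and $d_L(x)=|N(x)\cap L|$.
\begin{itemize}
\item Because $L$ is a clique, every $\ell\in L$ satisfies $d_H(\ell)<(1+\eta)n/2-|L|+1$.
\item Every $h\in H$ satisfies $d_L(h)\ge(1+\eta)n/2-|H|+1$.
\item With $K=(1+\eta)n/2+1$ this gives $|H|(K-|H|)\le e(L,H)<|L|(K-|L|)$, i.e.\ $(|H|-|L|)(K-n)<0$.
\item Since $K<n$, this yields $|L|<n/2$. Consequently every high vertex has more than $\eta n/2$ high neighbours.
\end{itemize}

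Then split as the paper does.
\begin{itemize}
\item If $|N(v)\cap L|\ge 2\sqrt{\alpha_1}n$, the clique $N(v)\cap L$ makes $v$ $\alpha_1$-strongly absorbable.
\item Otherwise $N(v)$ contains at least $\eta n/2$ high vertices, each with more than $\eta n/2$ high neighbours. Your claim, together with discarding the $O(n^3)$ degenerate quadruples, then gives at least $\alpha_2 n^4$ weak absorbers.
\end{itemize}

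The paper's own proof skips the same point. It bounds the bad vertices of $N(x)\cap N(y)$ by $\sqrt{\alpha_1}n$, whereas that bound was only established for $N(v)$.
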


We first prove Lemma \ref{absorbing} under the assumption that Lemma \ref{absorbability} holds; the proof of Lemma \ref{absorbability} will be provided later.

\begin{proof}[\textbf{Proof of Lemma \ref{absorbing}:}]
Define $\alpha_1$ and $\alpha_2$ such that \(1/n \ll \alpha_2 \ll \alpha_1 \ll \eta\ll1\), and \( \mu = 2^{-10}\alpha_2^2 \).
Let \(G\) be an oriented graph on $n$ vertices with $\sigma_2(G)\ge (1+\eta)n$,  and \(U_s \subseteq V(G)\times V(G)\) be the set of all pairs of vertices \(\{u,v\}\)  such that \(\{u,v\}\) is \(\alpha_1\)-strongly absorbable. Let \(U_w\) denote the set of vertices \(v\) such that \(v\) is \((\alpha_2,\alpha_1)\)-weakly absorbable. For each pair of vertices $\{u,v\}$ (not necessary to be distinct), write \(\mathcal{A}_s(u,v)\) and \(\mathcal{A}_w(u,v)\) for the sets of strong absorbers and weak absorbers of $\{u,v\}$, respectively. In particular, if $u=v$, we wirte \(\mathcal{A}_w(u,v)\) as $\mathcal{A}_w(u)$.
By Lemma \ref{disjoint-tuples} to \(U_w\) and \(\mathcal{A}_w(u)\) with \(\sigma = \alpha_2\) and \(t = 4\). This yields a family \(\mathcal{F}_w\) of weak absorbers such that
\begin{equation}\label{Fw}
   |\mathcal{F}_w| \leq 2^{-6} \alpha_2 n, \quad |\mathcal{A}_w(u) \cap \mathcal{F}_w| \geq 2^{-10} \alpha_2^2 n \text{ for all } u \in S_w.
\end{equation}
Next, for each \(\{u,v\} \in U_s\), define $\mathcal{A}'_s(u,v)\subseteq \mathcal{A}_s(u,v)$ obtained by removing all elements that intersect \(V(\mathcal{F}_w)\). Since \(\alpha_2 \ll \alpha_1\), we may then choose a family \(\mathcal{F}_s\) of strong absorbers satisfying the analogue of \eqref{Fw} with respect to \(U_s\) and \(\mathcal{A}'_s(u,v)\).

We first show that $P_{abs}\le8\sqrt{\mu} n$.
The absorbing path \(P_{abs}\) is built by taking all strong absorbers (each edge in \(\mathcal{F}_s\)) and, for every weak absorber represented by a 4-tuple \(\{a, a', b', b\}\) in \(\mathcal{F}_w\), the two edges \(aa'\) and \(b'b\) together with an \((a', b')\)-path that is vertex-disjoint from all other absorbers in \(\mathcal{F}_s \cup \mathcal{F}_w\). These components are then connected sequentially, where by \cref{connecting} each connection step uses at most one additional vertex. Consequently, the total number of vertices in \(P_{abs}\) is at most \(5|\mathcal{F}_s \cup \mathcal{F}_w| \leq \alpha_2 n/4=8\sqrt{\mu}n\).

Next, the absorbing property of \(P_{abs}\) is demonstrated.
Let $W\subseteq V(G)\backslash V(P_{abs})$ be a set of at most $\mu n$ vertices.
By Lemma \ref{absorbability}, every vertex is either strongly absorbable or weakly absorbable; consequently, each vertex in \(W\) can be absorbed into \(P_{abs}\) using exactly one strong absorber and at most one weak absorber.  By repeatedly applying this absorption procedure, one can ensure that \(G[V(P_{abs}) \cup W]\) contains a spanning path having the same end vertices as \(P_{abs}\).
\end{proof}

\begin{proof}[\textbf{Proof of \cref{absorbability}:}]
   Set \(A:=N(v)\). We call a vertex $v$ \emph{good} if $d(v) \geq (1+\eta)n/2$, and \emph{bad} otherwise. Furthermore we have $\delta(G)\ge (1+\eta)n-n=\eta n.$
%   The proof proceeds in two cases based on the type of $v$.

    Let $B \subseteq A$ be the set of bad vertices in $A$. Suppose first that $|B| \geq \sqrt{\alpha_1} n$. We claim that $B$  induces a clique. Indeed, if two vertices $u$ and $z$ in \(B\) are non-adjacent, then $d(u)+d(z)\le (1+\eta)n$, yielding a contradiction.
Consequently, $B$ contains at least $\alpha_1n^2$ edges, each of which provides a strong absorber for $v$.

Now suppose that $|B| < \sqrt{\alpha_1} n$, the proof proceeds in two cases based on the type of $v$.
If $v$ is good, then there are more than $(1+\eta)n/2 - \sqrt{\alpha_1} n > \alpha_1 n$ good vertices in $A$. As any two good vertices have at least $2(1+\eta)n-n=\eta n$ common neighbors, $G[A]$ contains at least $\eta n\cdot\alpha_1n>\alpha_1n^2$ edges. Each such edge can serve as a strong absorber for $v$.

Otherwise, $\eta n\le|A| < (1+\eta)n/2$.
Then $A$ contains at least $\eta n-\sqrt{\alpha_1} n\ge \sqrt[4]{\alpha_2} n$ good vertices, choose vertices $x$ and $y\in V(G)$, their common neighborhood $C := N(x) \cap N(y)$ satisfies $|C| \geq \eta n$. Removing from $C$ the at most $\sqrt{\alpha_1} n$ bad vertices it may contain, we obtain a set $C'$ of good vertices with $|C'| \geq \eta n - \sqrt{\alpha_1} n \ge \sqrt[4]{\alpha_2} n .$
    Furthermore, for each pair of vertex $(c,d) \in C'$ is good, by the above argument, $(c,d)$ is $\alpha_1$-strongly absorbable, since $c$ and $d$ are both good vertex and there are at least $\alpha_1 n^2$ edges in two good vertices' common neighbors. Thus, there exist at least \(\alpha_2 n^4\) triples \(\{x, y, c, d\}\) with \(x, y \in A\) and \(c, d \in C'\) that form an $\alpha_1$-weak absorber for $v$. Hence, $v$ is $(\alpha_2,\alpha_1)$-weakly absorbable.

\end{proof}

\noindent \textbf{4.1.3 Reservoir Lemma.}
After presenting  the absorbing and covering lemmas, the next  lemma allow us to efficiently connect each pairs of vertices with at most one vertex, without disturbing the existing structure of $G$.

\begin{lemma}[Reservoir Lemma]\label{reservoir}
Suppose \(0 < 1/n \ll \tau \ll \mu \ll\eta\ll 1\). Let \(G\) be an \(n\)-vertex oriented graph satisfying $\sigma_{2}(G)\ge(1+\eta )n$. For any vertex subset \(X \subseteq V(G)\) with \(|X| \le 8\sqrt{\mu}n\) there exists a set of vertices \(\mathcal{R} \subseteq V(G) \setminus X\) of size at most \(2^{-6} \tau n\) having the following property:
For every \(S \subseteq \mathcal{R}\) with \(|S| \le 2^{-11} \tau^2 n\) and for every pair of non-adjacent vertices \(u\) and $v$ in \(V(G) \setminus \mathcal{R}\), there exists an \((u,v)\)-path with one internal vertex from \(\mathcal{R} \setminus S\).
\end{lemma}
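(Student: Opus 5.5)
The plan is to obtain the reservoir $\mathcal{R}$ as a random-like set via \cref{disjoint-tuples} applied with $t=1$. Let $U := V(G)\setminus X$. Since $|X|\le 8\sqrt{\mu}n$ and $\mu\ll 1$, we have $|U|\ge n/2$.

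Let $S_0\subseteq V(G)\times V(G)$ be the set of all ordered pairs $(u,v)$ of distinct non-adjacent vertices. For each $(u,v)\in S_0$, define $\mathcal{A}(u,v)$ to be the family of $1$-tuples $(w)$ with $w\in U$ and $w\in N(u)\cap N(v)$. Each such $w$ gives a $(u,v)$-path $uwv$ of length two with one internal vertex, in the oriented sense used throughout (edges may point either way). By the remark preceding \cref{connecting}, non-adjacent $u,v$ satisfy $d(u)+d(v)\ge(1+\eta)n$, so $|N(u)\cap N(v)|\ge \eta n$. Hence
\[
|\mathcal{A}(u,v)|\ge \eta n-|X|\ge \eta n-8\sqrt{\mu}n\ge \eta n/2\ge \tau n,
\]
using $\mu\ll\eta$ and $\tau\ll\eta$.

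Now apply \cref{disjoint-tuples} with $\sigma=\tau$ and $t=1$ to $U$, $S_0$ and the families $\mathcal{A}(u,v)$; this is legitimate since $|\mathcal{A}(u,v)|\ge\tau n$. It yields a family $\mathcal{F}$ of vertices of $U$ with
\[
|\mathcal{F}|\le 2^{-6}\tau n \quad\text{and}\quad |\mathcal{A}(u,v)\cap\mathcal{F}|\ge 2^{-10}\tau^2 n \text{ for every } (u,v)\in S_0.
\]
Set $\mathcal{R}:=V(\mathcal{F})$. Then $\mathcal{R}\subseteq V(G)\setminus X$ and $|\mathcal{R}|\le 2^{-6}\tau n$, as required.

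For the connecting property, take $S\subseteq\mathcal{R}$ with $|S|\le 2^{-11}\tau^2 n$ and non-adjacent $u,v\in V(G)\setminus\mathcal{R}$. Then $(u,v)\in S_0$, so $\mathcal{R}$ contains at least $2^{-10}\tau^2 n$ common neighbours of $u$ and $v$. Removing $S$ deletes at most $2^{-11}\tau^2n$ of them, leaving at least $2^{-11}\tau^2 n>0$ vertices $w\in\mathcal{R}\setminus S$. Any such $w$ gives the desired path $uwv$.

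The only delicate point is confirming that \cref{disjoint-tuples} applies with $t=1$ and with the pair set $S_0$ of size up to $n^2$. Its proof is the standard Chernoff-plus-union-bound argument over at most $n^2$ pairs, which works since each failure probability is $e^{-\Omega(n)}$; the lemma as stated allows arbitrary $S$ and $1\le t\le 4$, so we may invoke it directly. If one prefers to avoid it, I would instead choose $\mathcal{R}$ by including each vertex of $U$ independently with probability $2^{-7}\tau$. Chernoff bounds then give $|\mathcal{R}|\le 2^{-6}\tau n$ and $|N(u)\cap N(v)\cap\mathcal{R}|\ge 2^{-8}\tau\eta n\ge 2^{-10}\tau^2n$ for all pairs simultaneously with positive probability.
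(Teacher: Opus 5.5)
Your proposal is correct and is essentially the paper's proof. The paper applies \cref{disjoint-tuples} with $t=1$, $\sigma=\tau$ and $U=V(G)\setminus X$ to the common neighbourhoods outside $X$ of non-adjacent pairs, which have size at least $\eta n-|X|\ge\tau n$; it then sets $\mathcal{R}=V(\mathcal{F})$ and uses $2^{-10}\tau^2 n>|S|$ to pick the internal vertex from $\mathcal{R}\setminus S$. Using ordered rather than unordered pairs, or your optional random-selection alternative, changes nothing of substance.
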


\begin{proof}
Let $X \subseteq V(G)$ with $|X| \leq 8\sqrt{\mu} n$. Denote by $Y$ the set of all unordered pairs of non-adjacent vertices in $G$.
For each pair $\{u,v\}\in Y$, write $\mathcal{C}(u, v)$ for the set of vertices in \(V(G)\setminus X\) that connect \(u\) and \(v\) (the set of common neighbors of \(u\) and \(v\) out of $X$).
By \cref{connecting}, we have $|\mathcal{C}(u, v)| \geq \eta n - |X| \geq \tau n$.

We now apply \cref{disjoint-tuples} where \(V(G) \setminus X\) plays the role of \(U\) and \(\sigma = \tau\), \(t = 1\). This yields a family  $\mathcal{F}\subseteq \bigcup_{\{u,v\}\in Y} \mathcal{C}(u,v)$ such that
\begin{equation}\label{F}
    |\mathcal{F}| \leq 2^{-6} \tau n \quad \text{and} \quad |\mathcal{C}(u, v) \cap \mathcal{F}| \geq 2^{-10} \tau^2 n \quad \text{for all } (u, v) \in Y.
\end{equation}
Set \( \mathcal{R} := V(\mathcal{F}) \). Now consider any subset \( S \subseteq \mathcal{R} \) with \( |S| \leq 2^{-11} \tau^2 n \). For any non-adjacent pair \( \{u, v\} \in Y \), inequality \eqref{F} guarantees the existence of at least \( 2^{-10} \tau^2 n \) vertices adjacent to both \( u \) and \( v \), a quantity exceeding \( |S| \). Consequently, one can select a vertex in \( \mathcal{R} \setminus S \) that forms $(u, v)$-path of length two. This completes the proof.

\end{proof}

\subsection{Completion of \cref{th2}}\label{sec:proof1}

 Let \(M \in \mathbb{N}\) and introduce additional constants such that
\[
1/n \ll 1/M \ll \varepsilon \ll \tau \ll \mu \ll d \ll \eta \le 2\gamma\ll 1.
\]
Applying \cref{absorbing} yields an absorbing path $P_{abs}$ such that $|V(P_{abs})| \leq 8\sqrt{\mu}n$. Moreover, for every subset $U \subseteq V(G) \setminus V(P_{abs})$ with $|U| \leq \mu n$, the induced oriented subgraph $G[V(P_{abs}) \cup U]$ contains a spanning path sharing the same end vertices as $P_{abs}$.
We then apply \cref{reservoir} with $X = V(P_{abs})$, obtaining a reservoir set $\mathcal{R} \subseteq V(G) \setminus V(P_{abs})$ such that $|\mathcal{R}| \leq 2^{-3} \tau n$.

    Our next goal is to construct a Hamilton cycle that contains $P_{abs}$ as a subpath and covers all but at most $\mu n$ vertices of $V(G)$. The remaining vertices will later be absorbed by means of \cref{absorbing}.
    Set $G'=G\backslash(\mathcal{R}\cup V(P_{abs}))$ and let $|G'|=n'$. Applying \cref{lem:covering} to obtain a family \(\mathcal{P} := \{P_1, P_2, \ldots, P_t\}\) of vertex-disjoint paths with \(t \le M\), along with a set of uncovered vertices of size at most \( 5 \varepsilon n\).
    Now \cref{connecting} allows us to connect the dominant direction of each $P_i\in \mathcal{P}$ and $P_{abs}$. Define \(P_0 := P_{abs}\) and for each \(0 \le i \le t\), let \(P_i\) be a \((u_i, v_i)\)-path.
 Consider first \( P_0 \) and \( P_1 \), either $v_0$ is adjacent to $u_1$, or, by \cref{connecting}, there exists a \( (v_0, u_1) \)-path with one internal vertex lie in the reservoir set \( \mathcal{R} \).
  We claim that \cref{connecting} can be applied greedily to construct a \( (v_i, u_{i+1}) \)-path connecting the dominant direction of $P_i$ and $P_{i+1}$ for each \( 0 \leq i \leq t \), where \( u_0 = u_{t+1} \), since the total number of used vertices in the reservoir set $\mathcal{R}$ is at
most $M\le 2^{-11} \tau^2 n $ by our choice of constants \(1/n \ll 1/M \ll \tau\). This yields a cycle \( C_0 \) that includes \( P_0 \) and covers a subset of the vertices in \( \mathcal{R} \).
Now, let \(U := V(G) \setminus V(C_0)\). Since $|U| \le 2^{-3}\tau n + 5\varepsilon n \le \mu n$, an application of \cref{absorbing} shows that there exists a directed path \(P\) spanning \(V(P_{abs}) \cup U\) and shares the same end vertices as \(P_{abs}\). Replacing \( P_{abs} \) with \( P \) in \( C_0 \), we obtain the desired  Hamilton cycle.

It remains to verify the bound of $\sigma_{\max}(C)$. A straightforward calculation gives the required lower bound for  $\sigma_{\max}(C)$. Throughout the proof, neither the absorption step nor the connection step decrease the number of edges contributing to $\sigma_{\max}(\mathcal{P})$. Hence we may  assume that $\sigma_{\max}(C) \geq \sigma_{\max}(\mathcal{P})$.
Finally, by \cref{lem:covering}, we  obtain a Hamilton cycle $C$ satisfying
\[
\sigma_{\max}(C) \geq \frac{\sigma_2(G')}{2} - 5dn' \geq \frac{1}{2}\left( \sigma_2(G) - 2(2^{-3}\tau n+8\sqrt{\mu}n) \right) - 5dn \geq \frac{\sigma_2(G)}{2} - \gamma n.
\]
This completes the proof.
   % $\hfill\square$

    % since $G'=G\backslash(V(P_{abs})\cup \mathcal{R} ).$

    %   \begin{align*}
    % \sigma_{\max}(C)
    % &\ge \sum_{P \in \mathcal{P}} \sigma_{\max}(P)\\
    % &\overset{\mathclap{\eqref{sigma2}}}{\ge}
    % \sigma_2(G')/2 - O(d)n' \\[2mm]
%   &\ge (\sigma_2(G)-2(2^{-10}\alpha_{2}^2 n)-2(2^{-3}\alpha_{3} n))/2 - O(d)n\notag\\[2mm]
    % &\ge (\sigma_2(G))/2 - O(d)n\\[2mm]
%   \end{align*}

\section{Concluding remarks}\label{sec:remark}
In this paper, we employ the absorption method to resolve \cref{pro1} posed by Freschi and Lo \cite{Freschi}, and our result is asymptotically tight. It is worth noting that although the absorption method is used here, a structural approach might also be applicable to this problem. However, a direct application of the structural method from \cite{Freschi} does not suffice to answer \cref{pro1}.

Several aspects remain open and warrant further investigation. Regarding \cref{th2}, determining the precise value of $\sigma_{\max}(C)$ presents a challenging open problem. Furthermore,
since a cycle factor naturally generalizes the notion of a Hamilton cycle, it is plausible that an analogous statement holds in this broader setting. This leads to the following conjecture, which can be viewed as an extension of the El-Zahar conjecture \cite{El}.

\begin{conjecture}\label{conj:factor}
Let $G$ be an oriented graph on $n = n_1 + n_2 + \cdots + n_k$ vertices, where $n_i \ge 3$ for each $1 \le i \le k$. If $\delta(G) \ge \sum_{i=1}^k \lceil n_i/2 \rceil$, then $G$ contains a cycle factor $\mathcal{C}$ satisfying $\sigma_{\max}(\mathcal{C}) \ge \delta(G)$.
\end{conjecture}

% Theorem~1.7 does not give an explicit form of $f(r,n)$, because the exact value of $M_r$ is unknown for $r > 7$. We suspect that $M_r = 3$ for all $r$. If true, then $3\delta(G) - n$ is likely the best possible lower bound for $\sigma_{\max}(H)$. This motivates the following conjecture concerning the oriented discrepancy of the square of a Hamilton cycle.

% \begin{conjecture}\label{conj:square}
% Let $G$ be an oriented graph on $n \ge 3$ vertices. If $\delta(G) \ge 2n/3$, then $G$ contains the square of a Hamilton cycle $H$ such that $\sigma_{\max}(H) \ge 3\delta(G) - n$.
% \end{conjecture}

\end{document}